\documentclass[reqno]{amsart}
\usepackage[textheight=22cm,textwidth=15cm,headsep=1cm,centering]{geometry}
\usepackage{amsmath,amssymb,amsthm,esint}
\usepackage[hidelinks]{hyperref}

\newtheorem{theorem}{Theorem}[section]
\newtheorem{lemma}[theorem]{Lemma}
\newtheorem{definition}[theorem]{Definition}
\newtheorem{proposition}[theorem]{Proposition}
\newtheorem{corollary}[theorem]{Corollary}
\newtheorem{conjecture}[theorem]{Conjecture}
\newtheorem{remark}[theorem]{Remark}

\begin{document}

\title[Inner regularity and Liouville theorems for stable solutions]
{Inner regularity and Liouville theorems for stable solutions to the mean curvature equation}
\author[Xu]{Fanheng Xu}
\address{School of Mathematics (Zhuhai), Sun Yat-Sen University, 519082, Zhuhai, P. R. China}
\email{xufh7@mail.sysu.edu.cn}
\thanks{Xu was supported by the National Natural Science Foundation of China (No.12201654)}
\subjclass[2020]{Primary: 35B35, 35B53, 35B65, 35J62; Secondary: 35B40, 35J70, 35J93}
\keywords{Mean curvature equation, stable solution, Morrey regularity, Liouville theorem}

\begin{abstract}
Let $f\in C^1(\mathbb{R})$.
We study stable solutions $u$ of the mean curvature equation
\begin{align*}
\operatorname{div}\left( \frac{\nabla u}{\sqrt{1+|\nabla u|^2}} \right) = -f(u)
\qquad \text{in}\ \Omega \subset \mathbb{R}^n.
\end{align*}
In the local setting we prove that $\nabla u$ satisfies inner Morrey regularity $M^{p_n}$,
where
\begin{align*}
p_n := \left\{
\begin{array}{ll}
n,\qquad & \text{if}\ 2\leq n\leq 5, \\
\frac{n}{n-4\sqrt{n-1}+4},\qquad & \text{if}\ n\geq 6,
\end{array}
\right.
\end{align*}
together with the estimate
\begin{align*}
\|\nabla u\|_{M^{p_n}(B_1)} \leq C \left( 1+\|\nabla u\|_{L^1(B_2)} \right).
\end{align*}
The exponent $p_n$ is optimal for $n\leq5$, as shown by an explicit one-dimensional example.
For radial solutions we show that the symmetry center is at most a removable singularity.

Globally, we establish Liouville-type theorem:
any stable solution satisfying the growth condition
\begin{align*}
|\nabla u(x)| =
\left\{
\begin{array}{lll}
o(|x|^{-1}) \  & \text{as}\ |x|\rightarrow +\infty& \text{when}\ 2\leq n\leq 10, \\
o(|x|^{-n/2+\sqrt{n-1}+1}) \  & \text{as}\ |x|\rightarrow +\infty& \text{when}\ n\geq 11,
\end{array}
\right.
\end{align*}
must be constant.
In particular, no nonconstant radial stable solution exists in dimensions \(2\leq n\leq6\),
which highlights a global rigidity of stable radial solutions in low dimensions and extend the classical Liouville theorem of Farina and Navarro.

Several exponents appearing in our results are new for mean curvature equations, showing both similarities and differences with the corresponding theorems for semilinear equations.
\end{abstract}

\maketitle

\section{Introduction}

This article is concerned with stable solutions of the mean curvature equation
\begin{align}\label{equation}
\operatorname{div}\left( \frac{\nabla u}{\sqrt{1+|\nabla u|^2}} \right) = -f(u)
\qquad \text{in}\ \Omega \subset \mathbb{R}^n.
\end{align}
Consider the area-type functional with a potential term
\begin{align*}
\mathcal{E}[u] = \int_\Omega \left( \sqrt{1+|\nabla u|^2}+F(u) \right) dx,
\end{align*}
where $F' = f$. Computing its first variation yields
\begin{align*}
\frac{d}{dt} \mathcal{E}[u+t\phi] \bigg|_{t=0} = \int_\Omega \left( \text{div}\left( \frac{\nabla u}{\sqrt{1+|\nabla u|^2}} \right)+f(u) \right) \phi dx.
\end{align*}
Hence, a solution $u$ to equation (\ref{equation}) is precisely a critical point of $\mathcal{E}$
and (\ref{equation}) is the Euler-Lagrange equation associated with $\mathcal{E}$.

Next, we examine the second variation of $\mathcal{E}$ at $u$:
\[
\frac{d^2}{dt^2} \mathcal{E}[u+t\phi] \bigg|_{t=0}
= \int_{\Omega}
\left(
\frac{|\nabla \phi|^2}{\sqrt{1+|\nabla u|^2}} - \frac{(\nabla u \cdot \nabla \phi)^2}{(1+|\nabla u|^2)^{3/2}} - f'(u)\phi^2
\right) dx.
\]
If this quadratic form is nonnegative for every test function $\phi$, then $u$ is called a stable critical point. This motivates the following precise definition:

\begin{definition}[Stable solution]
Let $f \in C^1(\mathbb{R})$, and let $u \in W_{loc}^{1,1}(\Omega)$ be a weak solution to (\ref{equation}) in the sense that $f(u) \in L^1_{\text{loc}}(\Omega)$ and
\begin{align}\label{weak equation}
\int_{\Omega} \frac{\nabla u \cdot \nabla \varphi}{\sqrt{1+|\nabla u|^2}} dx
= \int_{\Omega} f(u) \varphi dx \qquad \forall \ \varphi \in C_c^\infty(\Omega).
\end{align}
Assume further that $f'(u) \in L^1_{\text{loc}}(\Omega)$. We say that $u$ is a stable solution of (\ref{equation}) in $\Omega$ if it satisfies the inequality
\begin{align}\label{stable}
\int_{\Omega} f'(u)\xi^2 dx
\leq
\int_{\Omega}
\left(
\frac{|\nabla \xi|^2}{\sqrt{1+|\nabla u|^2}} - \frac{(\nabla u \cdot \nabla \xi)^2}{(1+|\nabla u|^2)^{3/2}}
\right) dx \quad \forall \ \xi \in C_c^\infty(\Omega).
\end{align}
\end{definition}

\subsection{Inner Regularity}
While regularity theories are well developed for semilinear and $p$-Laplacian type equations ($p>1$),
both in the local and fractional settings
(see
\cite{Cabre Figalli Acta 2020,
Cabre Miraglio Sanchon 2022 Adv. Calc. Var.,
Dupaigne Farina 2023 CCM,
Erneta 2023 CPAA,
Peng Zhang Zhou AnalPDE 2024,
Erneta 2024 JDE, Erneta 2025 J. Reine Angew. Math.,
Sanz-Perela 2025 AMPA, 
Zhang 2025 DCDS}
and the references therein),
the corresponding theory for mean curvature operator remains largely unexplored (except for $f=0$).


Our main result concerns the integrability regularity of the gradient, described in terms of Morrey spaces. We use the following definition:

\begin{definition}[Morrey Spaces]
Let $1\leq p \leq \infty$.
A function $g \in L_{loc}^1(\Omega)$ is said to belong to the
Morrey space $M^p(\Omega)$ if there exists a constant $K>0$ such that
\begin{align}\label{d5a0e755b}
\int_{B_\rho(x_0)\cap\Omega} |g(x)|dx
\leq K\rho^{n(1-1/p)}
\end{align}
for all $x_0\in\Omega$ and all radii
$0<\rho\leq \operatorname{diam}(\Omega)$.
We define the norm $\| g \|_{M^p(\Omega)}$ to be the infimum of the constants $K$ satisfying (\ref{d5a0e755b}).
\end{definition}

Since the result is local, we state it in the ball.

\begin{theorem}\label{theorem regularity}
Let $f\in C^1(\mathbb{R})$
and $u\in W_{loc}^{3,1}(B_2)$ be a stable solution of (\ref{equation}) in $B_2$.
Define the exponent
\begin{align*}
p_n := \left\{
\begin{array}{ll}
n\qquad & \text{if}\ 2\leq n\leq 5, \\
\frac{n}{n-4\sqrt{n-1}+4}\qquad & \text{if}\ n\geq 6.
\end{array}
\right.
\end{align*}
Then
\begin{align}\label{gradL1 ctrl GradMn}
\|\nabla u\|_{M^{p_n}(B_1)}
\leq C \left( 1+\|\nabla u\|_{L^1(B_2)} \right),
\end{align}
where $C$ is a dimensional constant.
\end{theorem}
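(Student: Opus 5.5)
The plan is to combine the stability inequality (\ref{stable}) with the equation satisfied by the derivatives of $u$, following the Sternberg--Zumbrun and Cabr\'e--Figalli strategy, adapted to the mean curvature operator in (\ref{equation}). Write $W:=\sqrt{1+|\nabla u|^2}$ and let $L\phi:=\operatorname{div}\big(W^{-1}\nabla\phi-W^{-3}(\nabla u\cdot\nabla\phi)\nabla u\big)$ be the linearized operator, so that (\ref{stable}) reads $\int f'(u)\xi^2\le -\int \xi L\xi$. Since $u\in W^{3,1}_{loc}$, we may differentiate (\ref{equation}) in $x_k$ to get $Lu_k+f'(u)u_k=0$. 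For a test function of the form $\xi=c\,\eta$, the standard computation (multiply the linearized equation by $u_k\eta^2$, sum over $k$, subtract from (\ref{stable}); justified by approximation using $W^{3,1}_{loc}$) gives a geometric inequality. With $c=|\nabla u|$ it reads
\begin{align*}
\int_{B_2} \mathcal{A}(u)\,\eta^2\,dx\le \int_{B_2}\frac{|\nabla u|^2}{W}\Big(|\nabla\eta|^2-\frac{(\nabla u\cdot\nabla\eta)^2}{W^2}\Big)dx .
\end{align*}
Here $\mathcal{A}(u)\ge0$ collects the second-fundamental-form and tangential-gradient terms of the level sets and of the graph, that is, the part of $|\nabla^2u|^2$ not controlled by $|\nabla|\nabla u||^2$. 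The first step is to carry out this computation and identify $\mathcal{A}$. In particular we need a pointwise lower bound of Kato type. The model is the radial case, where $\mathcal{A}$ dominates $(n-1)\,|\nabla u|^2 W^{-1} r^{-2}$.

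The second step inserts radial weights: take $\eta=r^{-\beta}\zeta$ with $r=|x-x_0|$ and $\zeta$ a cutoff equal to $1$ on $B_\rho(x_0)$ and supported in $B_{2\rho}(x_0)$. Expanding $|\nabla\eta|^2$ produces $\beta^2 r^{-2\beta-2}$ terms. These are absorbed by the left side using the $(n-1)r^{-2}$ curvature bound together with a weighted Hardy inequality for $|\nabla u|r^{-\beta}$, which also uses the radial derivative term in $\mathcal{A}$. The admissible range of $\beta$ comes out of a quadratic condition, roughly $\beta^2-(n-2)\beta/1\cdots<n-1$ after optimization. I expect the threshold to be $2\beta<n-2$ when $n\le5$ and $2\beta<2\sqrt{n-1}-2$ when $n\ge6$; note that $n-4\sqrt{n-1}+4=(\sqrt{n-1}-2)^2+1$, consistent with this switch at $n=6$. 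The outcome is a weighted estimate
\begin{align*}
\int_{B_\rho(x_0)}\frac{|\nabla u|^2}{W}\,r^{-2\beta-2}\,dx\le C\rho^{-2\beta-2}\int_{B_{2\rho}(x_0)\setminus B_\rho(x_0)}\frac{|\nabla u|^2}{W}\,dx .
\end{align*}

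The third step converts this into the Morrey bound (\ref{gradL1 ctrl GradMn}). Since $|\nabla u|^2/W\sim|\nabla u|$ where $|\nabla u|\ge1$, and the set where $|\nabla u|<1$ contributes at most $C\rho^n$ (which explains the ``$1+$'' in (\ref{gradL1 ctrl GradMn})), we get a decay of $\int_{B_\rho}|\nabla u|$ in $\rho$ with exponent $n-2-2\beta$. A hole-filling/iteration over dyadic radii, using the weighted bound on $B_\rho$ against the annulus, upgrades this to $\int_{B_\rho(x_0)}|\nabla u|\le C\rho^{n(1-1/p_n)}(1+\|\nabla u\|_{L^1(B_2)})$ for all $x_0\in B_1$. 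Finally, letting $\beta$ tend to its endpoint recovers exactly $n(1-1/p_n)$, which equals $n-1$ for $n\le5$ and $4\sqrt{n-1}-4$ for $n\ge6$. For $n\le5$ the endpoint value $n-1$ must be attained exactly rather than approached, which may require a separate covering argument at the endpoint.

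The main obstacle is the first step: obtaining a sharp pointwise lower bound for $\mathcal{A}(u)$ that survives the anisotropic factor $W^{-3}(\nabla u\cdot\nabla\eta)^2$. This factor degenerates where $\nabla u$ is parallel to $\nabla r$, and it is exactly there that the Hardy absorption must still work. I would first try to decompose $\nabla\eta$ into components along and orthogonal to $\nabla u$, and treat the degenerate direction using the radial derivative term in $\mathcal{A}$, as in the radial computation that presumably produces the optimal exponent.
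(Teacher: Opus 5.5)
\paragraph{There is a genuine gap in Step 1.}
Step 1 asks for a bound that does not exist. The Sternberg--Zumbrun quantity $\mathcal{A}(u)=\sum_k A\nabla u_k\cdot\nabla u_k-A\nabla|\nabla u|\cdot\nabla|\nabla u|$, with $A=W^{-1}(I-W^{-2}\nabla u\otimes\nabla u)$, is a curvature quantity of the level sets and of the graph, and it does not depend on the centre $x_0$.

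\begin{itemize}
\item A pointwise bound $\mathcal{A}\ge c\,|\nabla u|^2W^{-1}|x-x_0|^{-2}$ for every $x_0\in B_1$ would force $\mathcal{A}=+\infty$ wherever $\nabla u\neq0$, by letting $x_0\to x$.
\item Concretely, for any one-dimensional profile $u=u(x_1)$ one has $\nabla u_k=0$ for $k\ge2$, and $\nabla u_1$, $\nabla|\nabla u|$ are both multiples of $u''e_1$. Hence $\mathcal{A}\equiv0$, and your inequality degenerates to $0\le$ RHS. There is then nothing on the left to absorb the $\beta^2r^{-2\beta-2}$ terms.
\item There is no ``radial derivative term in $\mathcal{A}$'' for a Hardy inequality to use. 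Even in the radial case the $u''$ contributions cancel exactly, and $x\cdot\nabla u$ never enters the test function $\xi=|\nabla u|\eta$.
\item Extracting first-order information from $\mathcal{A}$ in the nonradial case would need something like a Michael--Simon or Carron-type inequality on level sets. That is a different and much harder argument, which you have not set up.
\end{itemize}

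Steps 2--3 also do not produce $p_n$.
\begin{itemize}
\item Your exponent $n-2-2\beta$, evaluated at the thresholds you name, gives $0$ and $n-2\sqrt{n-1}$, not $n-1$ and $4\sqrt{n-1}-4$.
\item Hole-filling only yields an unspecified small exponent.
\item Letting $\beta$ tend to an endpoint loses the endpoint $p_n$ itself, in every dimension.
\end{itemize}

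\paragraph{The missing idea: test with the radial derivative.}
The paper inserts $\xi=(x\cdot\nabla u)\eta$ into (\ref{stable}), and the Pohozaev-type function $\varphi=\operatorname{div}\bigl(x(x\cdot\nabla u)\eta^2\bigr)$ into (\ref{weak equation}). This gives two expressions for $\int f'(u)(x\cdot\nabla u)^2\eta^2$, so the $f'(u)$ terms drop out. The $|D^2u\,x|^2$ and $(x\cdot\nabla z)^2$ terms cancel between the two. The remaining second-order terms are removed using the identities for $\operatorname{div}(x\sqrt{1+z})$ and $\operatorname{div}\bigl(\nabla u\,(x\cdot\nabla u)/\sqrt{1+z}\bigr)$.

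The result is the purely first-order inequality (\ref{f5c25d11d}). Its left side, $\int z((n-1)z+n-2)(1+z)^{-3/2}\eta^2$, already controls $|\nabla u|$ where the gradient is large, with no $r^{-2}$ weight and no curvature.

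Next the paper takes $\eta=\rho^{-a}$ on $B_\rho$ and $\eta=|x|^{-a}\phi$ outside. The bulk terms $L_1+L_2+L_3$ then form a quadratic form in $(x\cdot\nabla u)^2/|x|^2$, with discriminant $a^2z^2\bigl(a(a+4)-4n+8\bigr)$.
\begin{itemize}
\item For $n\le5$, the choice $a=(n-1)/2\le2$ makes $L_1+L_2+L_3\ge -C|x|^{-2a}z(1+z)^{-3/2}$.
\item For $n\ge6$, the choice $a=2\sqrt{n-1}-2$ (zero discriminant) gives the same lower bound.
\end{itemize}
This lower bound is integrable because $2a<n$. It yields $\int_{B_\rho}|\nabla u|\le C\rho^{2a}\bigl(1+\|\nabla u\|_{L^1(B_2)}\bigr)$ directly, with no iteration. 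Since $2a=n(1-1/p_n)$, the endpoint $p_n$ is included.
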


A useful consequence of Theorem~\ref{theorem regularity} is the following compactness result:
Let ${v_\varepsilon}$ be a sequence of stable solutions of (\ref{equation}) in $B_2$ converging to $u$ in $W^{1,1}(B_2)$.
While such convergence only yields a priori $\nabla u\in L^1$, estimate (\ref{gradL1 ctrl GradMn}) allows us to upgrade the regularity of the limit, showing that $\nabla u\in M^{p_n}(B_1)$.
To the best of our knowledge, this is the first result providing regularity improvement (from $W^{1,1}$ to Morrey spaces) for stable solutions to the mean curvature equation.

For $2 \leq n \leq 5$, Theorem~\ref{theorem regularity} yields $\nabla u \in M^n$.
The exponent $n$ is critical: any improvement to $\nabla u \in M^{n+\alpha}$, $\alpha > 0$, would imply H\"older continuity of $u$ by the Morrey embedding theorem (see Theorem~7.19 in \cite{Gilbarg Trudinger Berlin 1998}).

In particular,
for semilinear equation
\begin{align}\label{semilinear equation}
\Delta u = -f(u),
\end{align}
Cabr\'e, Figalli, Ros-Oton and Serra \cite{Cabre Figalli Acta 2020} established the following regularity result:
\begin{theorem}[Cabr\'e et al. \cite{Cabre Figalli Acta 2020}]
Let $f \in C^{0,1}(\mathbb{R})$ be nonnegative, and $u\in C^2(B_2)$ be a stable solution of (\ref{semilinear equation}) in $B_2$. If $n\leq 9$, then
\begin{align*}
\| u \|_{C^\alpha(\overline{B}_1)} \leq C \| u \|_{L^1(B_2)},
\end{align*}
where $\alpha \in (0,1)$ and $C$ is a dimensional constant.
\end{theorem}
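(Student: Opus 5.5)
The plan follows the strategy of Cabr\'e, Figalli, Ros-Oton and Serra. It rests on two applications of the stability inequality (the semilinear case of (\ref{stable})), with test functions adapted to radial and tangential information. These are combined into a decay estimate for a weighted Dirichlet energy, and then into H\"older continuity via a Morrey/Campanato argument.

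\textbf{Step 1: radial-derivative control.} I would use the test function $\xi=(x\cdot\nabla u)\,|x|^{(2-n)/2}\zeta$ with $\zeta$ a cutoff. Since $x\cdot\nabla u$ satisfies the linearized equation $\Delta(x\cdot\nabla u)+f'(u)(x\cdot\nabla u)=-2f(u)$, and $f\ge0$ gives the correct sign after integrating by parts, stability yields
\[
\int_{B_\rho} |x|^{2-n}u_r^2\,dx\le C\int_{B_{3\rho/2}\setminus B_\rho}|x|^{2-n}|\nabla u|^2\,dx .
\]
The constant coming from Hardy's inequality is admissible precisely when $(n-2)(10-n)>0$, that is $n\le 9$; this is where the dimension restriction enters. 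Uniqueness of the center is irrelevant, since any point can be taken as origin.

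\textbf{Step 2: $W^{1,2}$ from $L^1$.} Testing stability with $\xi=|\nabla u|\eta$ (Sternberg--Zumbrun) gives a bound on $\int |A|^2|\nabla u|^2\eta^2$ in terms of $\int|\nabla u|^2|\nabla\eta|^2$, where $A$ is the second fundamental form of the level sets. Combining this with interpolation and a covering/scaling argument yields
\[
\|\nabla u\|_{L^{2+\gamma}(B_{3/2})}\le C\|\nabla u\|_{L^2(B_2)}\le C\|u\|_{L^1(B_2)}.
\]
The last inequality uses $f\ge0$, so that $u$ is superharmonic and $\int|\nabla u|$ is controlled by $\|u\|_{L^1}$ via the weak equation, followed by a small-constant absorption lemma.

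\textbf{Step 3: decay, the main obstacle.} The key is to show
\[
\int_{B_\rho}|x|^{2-n}u_r^2\le C\int_{B_{3\rho/2}\setminus B_\rho}|x|^{2-n}u_r^2 ,
\]
that is, to replace $|\nabla u|^2$ on the annulus by $u_r^2$. I would argue by contradiction and compactness. Normalize a sequence of stable solutions so that the annular energy is $1$ while the annular radial energy tends to $0$. The Step~2 bounds give strong $W^{1,2}$ compactness, so the limit is $0$-homogeneous and superharmonic on the annulus, hence constant, which contradicts the normalization. Hole filling then turns the inequality into the geometric decay
\[
\int_{B_\rho}|x|^{2-n}|\nabla u|^2\le C\rho^{2\alpha}\|u\|^2_{L^1(B_2)} .
\]
Finally, this decay gives $\nabla u\in M^{2,n-2+2\alpha}$ uniformly around every center in $B_1$, and Morrey's Dirichlet growth theorem yields $\|u\|_{C^\alpha(\overline B_1)}\le C\|u\|_{L^1(B_2)}$.
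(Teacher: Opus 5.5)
The paper gives no proof of this statement; it is quoted from Cabr\'e--Figalli--Ros-Oton--Serra only as motivation for Theorem~\ref{theorem bad regularity solution}, so the comparison has to be with their argument. Your overall architecture is theirs: a radial bound with weight $|x|^{(2-n)/2}$, gradient bounds starting from $L^1$, a compactness lemma trading $|\nabla u|^2$ for $u_r^2$ on annuli, hole filling, and Morrey.

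\textbf{The gap is in Step~2.} Interpolation, covering and absorption cannot raise integrability above $L^2$. They are what one uses \emph{after} having an $L^{2+\gamma}$ bound, to get $\|\nabla u\|_{L^2(B_{1/2})}\le C\|\nabla u\|_{L^1(B_1)}$. Used as you propose, the argument is circular. (Also, $\|\nabla u\|_{L^2(B_2)}\le C\|u\|_{L^1(B_2)}$ cannot hold on the same ball; the domain must shrink.)

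What is missing is how the Sternberg--Zumbrun inequality yields higher integrability:
\begin{itemize}
\item You need it in full form, $\int\mathcal A^2\eta^2\le\int|\nabla u|^2|\nabla\eta|^2$ with $\mathcal A^2=|D^2u|^2-|\nabla|\nabla u||^2=|\nabla u|^2|A|^2+|\nabla_T|\nabla u||^2$. You dropped the tangential term, and it is needed.
\item Combine it with the pointwise bound $|D^2u|\le C\mathcal A+|\Delta u|$.
\item Use the identity $-\operatorname{div}(|\nabla u|\nabla u)=2|\nabla u|f(u)+|\nabla u|^2\operatorname{div}(\nabla u/|\nabla u|)$, where $|\nabla u|\,|\operatorname{div}(\nabla u/|\nabla u|)|\le\sqrt{n-1}\,\mathcal A$.
\end{itemize}
Testing this identity with a cutoff and using $f\ge 0$ bounds $\int|\nabla u|\,|\Delta u|\,\eta$. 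Hence $\|\nabla(|\nabla u|^2)\|_{L^1(B_{1/2})}\le C\|\nabla u\|_{L^2(B_1)}^2$, and Sobolev gives $\gamma=2/(n-1)$. This, together with superharmonicity of the limit in Step~3, is where $f\ge0$ is genuinely used.

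The same $L^1$ control of $D^2u$ is also what Step~3 needs. An $L^{2+\gamma}$ bound only gives equi-integrability of $|\nabla u_k|^2$. Strong $L^2$ convergence of $\nabla u_k$ additionally requires $L^1$-compactness of $\nabla u_k$.

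\textbf{Smaller corrections.}
\begin{itemize}
\item In Step~1 the sign of $f$ plays no role. Since $\Delta(x\cdot\nabla u)+f'(u)(x\cdot\nabla u)=2\Delta u$ and $2\Delta u\,(x\cdot\nabla u)=\operatorname{div}\bigl(2(x\cdot\nabla u)\nabla u-|\nabla u|^2x\bigr)+(n-2)|\nabla u|^2$, the nonlinearity cancels.
\item The weight $|x|^{-a}$ with $a=\frac{n-2}{2}$ leaves the coefficient $a(4-a)=\frac{(n-2)(10-n)}{4}$ in front of $\int|x|^{2-n}u_r^2$; no Hardy inequality is involved. This coefficient is positive only for $3\le n\le 9$, not for all $n\le 9$. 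The cases $n\le 2$ must be reduced to $n=3$ by adding a dummy variable.
\item The compactness argument in Step~3 needs room. With the same annulus on both sides you have no estimates up to its boundary spheres. Instead, bound $\int|\nabla u|^2$ on an annulus (or on the whole ball, via the strong minimum principle for the superharmonic limit) by $\int u_r^2$ on a strictly larger annulus.
\item Hole filling only gives decay of $\int_{B_\rho}|x|^{2-n}u_r^2$. To reach $\rho^{2-n}\int_{B_\rho}|\nabla u|^2\le C\rho^{2\alpha}\|u\|_{L^1(B_2)}^2$, apply the key lemma once more on dyadic annuli before invoking Morrey.
\end{itemize}
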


This raises the natural question whether a similar H\"older improvement holds for mean curvature equation. 
The following result shows that such an improvement cannot, in general, be expected solely from the stability condition even in low dimensions.

\begin{theorem}\label{theorem bad regularity solution}
Let $n\geq 2$ and $\alpha>0$.
There exists a one-dimensional stable solution $u$ of equation (\ref{equation}) in $B_1$
(for a suitable nonlinearity $f$)
such that
\begin{align*}
\nabla u \notin M^{n+\alpha}(B_1).
\end{align*}
\end{theorem}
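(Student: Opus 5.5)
We look for a solution depending on one variable, $u(x)=w(x_1)$, with $w$ strictly increasing, chosen so that $|w'|$ blows up at an interior point strongly enough that $\nabla u\notin M^{n+\alpha}(B_1)$. Then we read off the nonlinearity $f$ from the equation and check stability.

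\textbf{Reduction to one dimension.} For $u(x)=w(x_1)$ the equation (\ref{equation}) becomes
\[
\left(\frac{w'}{\sqrt{1+w'^2}}\right)' = -f(w).
\]
Prescribe $\theta(t):=w'(t)/\sqrt{1+w'^2(t)}\in(-1,1)$ on $(-1,1)$. If $w$ is strictly monotone, so that $w^{-1}$ exists, we may define $f(s):=-\theta'(w^{-1}(s))$. Then $f\in C^1$ on the range of $w$, provided $\theta'\circ w^{-1}$ is $C^1$. Outside the range, $f$ is extended in a $C^1$ way.

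\textbf{Stability.} The key point is that strictly monotone one-dimensional solutions are stable. Set $\psi=u_{x_1}=w'>0$. Differentiating the equation in $x_1$ shows that $\psi$ solves the linearized equation
\[
\operatorname{div}(A(\nabla u)\nabla\psi)+f'(u)\psi=0, \qquad A(p)=\frac{I}{\sqrt{1+|p|^2}}-\frac{p\otimes p}{(1+|p|^2)^{3/2}}.
\]
This uses $u\in W^{3,1}_{loc}$, which we arrange. Here $A$ is positive definite. Writing $\xi=\psi\eta$ and integrating by parts (the standard Picone/Jacobi field argument) gives
\[
Q(\xi)=\int \psi^2\, A\nabla\eta\cdot\nabla\eta\ge0,
\]
which is exactly (\ref{stable}). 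Hence any solution with $w'>0$ is stable.

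\textbf{Singular gradient.} The singularity must be placed where $w'$ stays positive but is unbounded. We therefore want $w'(t)\to+\infty$ as $t\to0$, while $w$ stays smooth enough that $\theta$ is $C^1$-regular in terms of $w$. A blow-up of $w'$ at $t=0$ means $\theta\to1$. If $\theta$ is $C^2$ near $0$ with $\theta(0)=1$ and $\theta<1$ elsewhere, then $\theta'(0)=0$, and $\theta$ has a maximum of value $1$ at $0$.

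We choose $1-\theta(t)\sim c|t|^{\beta}$, which gives $w'\sim |t|^{-\beta/2}$. For $\nabla u\notin M^{n+\alpha}$ we need
\[
\int_{B_\rho}|\nabla u|\gtrsim \rho^{\,n-1}\rho^{1-\beta/2}
\]
to fail to be bounded by $\rho^{\,n(1-1/(n+\alpha))}$. Since $n(1-1/(n+\alpha))>n-1$, it suffices that $1-\beta/2$ is small. Taking $\beta$ close to $2$ (but $<2$, so that $w'\in L^1$ and $u\in W^{1,1}$) yields
\[
n-\tfrac{\beta}{2}<n-\tfrac{n}{n+\alpha}
\]
for $\beta$ near $2$. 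So we pick $\beta\in\bigl(2n/(n+\alpha),2\bigr)$.

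We also need $f=-\theta'\circ w^{-1}$ to be $C^1$. Near $0$ we have $w-w(0)\sim t^{1-\beta/2}$, so $t\sim (s-s_0)^{2/(2-\beta)}$, and $\theta'\sim t^{\beta-1}$ becomes a power $(s-s_0)^{2(\beta-1)/(2-\beta)}$. This exponent exceeds $1$ when $\beta$ is near $2$, so $f$ is $C^1$.

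\textbf{Main obstacle.} The hardest step is regularity near the singularity: the exact $C^1$ regularity of $f$ and the requirement $u\in W^{3,1}_{loc}$. The latter presumably need only hold away from the singular set, or in the weak sense the definition requires. To avoid asymptotic ambiguity, the plan is to pick an explicit $\theta$, for instance
\[
\theta(t)=1-|t|^{\beta}/2 .
\]
We then verify stability directly with the Picone identity applied to test functions, using that $\psi=w'$ is smooth and positive off $\{x_1=0\}$, together with a cutoff argument near the hyperplane, where $\psi^{-1}\to0$ helps.
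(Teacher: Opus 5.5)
Your proposal is correct, but it proves stability by a different mechanism from the paper.

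\textbf{The paper's route.} The paper takes the even profile $u=|x_1|^{1-a}$, $a\in(1/2,1)$, and writes $f$ in closed form. It then verifies (\ref{stable}) by direct estimates. It bounds $f'(u)\le \frac{a(2a-1)}{(1-a)^3}|x_1|^{3a-2}$ and bounds the quadratic form from below by $\frac{\lambda}{(1-a)^3}\int|x_1|^{3a}|\nabla\varphi|^2$. It closes with the weighted Hardy inequality of Lemma~\ref{Lemma Appendix}, with sharp constant $(2/(3a-1))^2$ and $\lambda=4a(2a-1)/(3a-1)^2<1$. The lower bound on the quadratic form only holds near the hyperplane, so the paper works in $B_{r_0}$ and rescales at the end. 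Because that $u$ is even in $x_1$, $u_{x_1}$ changes sign and there is no positive Jacobi field; this is why a Hardy inequality is needed.

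\textbf{Your route.} Your profile is the monotone counterpart, with the same gradient size $|t|^{-\beta/2}$ (set $\beta=2a$). So $\psi=w'>0$ solves the linearized equation, and Picone's identity gives stability on all of $B_1$. You need no Hardy inequality, no small ball and no rescaling. The price is that $f=-\theta'\circ w^{-1}$ is only implicit. Its $C^1$ regularity at $w(0)$ comes from $f'(w(t))=\frac{\beta(\beta-1)}{2}|t|^{\beta-2}/w'(t)\sim|t|^{3\beta/2-2}$. So you need $\beta>4/3$ in addition to $\beta\in(2n/(n+\alpha),2)$. This is the analogue of $a>2/3$, which the paper's explicit $f$ also needs in order to be $C^1$ at $0$.

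\textbf{A claim to correct.} Your sentence ``this uses $u\in W^{3,1}_{loc}$, which we arrange'' is false for your own example. Since $w''\sim|t|^{-\beta/2-1}\notin L^1$, we have $u\notin W^{2,1}_{loc}$; the paper's example is not in that space either. Nothing of the kind is needed, because the Definition only asks for $u\in W^{1,1}_{loc}$ and $f(u),f'(u)\in L^1_{loc}$.

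\textbf{What actually suffices.}
\begin{enumerate}
\item The weak equation holds because the flux $\theta(x_1)$ is $C^1$ for $\beta>1$.
\item In (\ref{stable}) the term with $|\nabla'\xi|^2$ is nonnegative. So it suffices to show, on each line parallel to the $x_1$-axis, that $\int(\rho\xi'^2-f'(w)\xi^2)\,dt\ge0$, where $\rho=(1+w'^2)^{-3/2}$.
\item On $\{t>\epsilon\}$ and $\{t<-\epsilon\}$, where $w$ is smooth, use $\rho\xi'^2-f'(w)\xi^2=\rho(\xi'-\xi w''/w')^2+(\theta'\xi^2/w')'$. This identity follows from $\rho w''=\theta'$ and $\theta''=-f'(w)w'$.
\item The boundary terms $-\theta'(\epsilon)\xi(\epsilon)^2/w'(\epsilon)$ and $\theta'(-\epsilon)\xi(-\epsilon)^2/w'(-\epsilon)$ are nonnegative and tend to $0$. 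Letting $\epsilon\to0$ gives (\ref{stable}).
\end{enumerate}
This is exactly the cutoff argument you sketch at the end, and it makes the appeal to $W^{3,1}$ unnecessary.
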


This demonstrates that the exponent $p_n$ in Theorem~\ref{theorem regularity} is at least optimal for dimensions $2\leq n\leq 5$, in the sense that no stronger Morrey regularity of $\nabla u$ can be derived from the stability condition alone.

We now turn to the case $n\geq 6$.
In this regime, the Morrey exponent
$$
p_n=\frac{n}{n-4\sqrt{n-1}+4},
$$
provided by Theorem~\ref{theorem regularity} corresponds to a very singular Morrey class.
Establishing the optimality of this exponent would require the construction of stable solutions exhibiting very poor regularity.
However, such examples appear to be difficult to obtain.
Note that the one-dimensional stable solutions used in Theorem~\ref{theorem bad regularity solution} cannot serve this purpose in high dimensions, since strongly singular one-dimensional profiles typically fail to satisfy the local integrability required in right-hand side (RHS) of (\ref{gradL1 ctrl GradMn}).

A natural alternative is to consider radially symmetric solutions with a singularity at the symmetry center.
For semilinear equation (\ref{semilinear equation}), it is well known that there exists stable radial solutions with isolated singularities when $n\geq 10$; see, for instance, \cite{Cabre Figalli Acta 2020} and references therein.
In contrast, we show that such behavior cannot occur for equation (\ref{equation}).

\begin{proposition}\label{proposition 1}
Let $n\geq 2$, $f\in C^1(\mathbb{R})$, and let $u\in W_{loc}^{3,1}(B_2)$ be a stable solution of (\ref{equation}).
Assume that $u$ is radially symmetric with respect to the origin $0$.
Then, for every $\alpha \in (0,1)$, there exists $K=K(n,\alpha)$ such that
\begin{align}\label{ef434ad8b}
\int_{B_\rho(0)} |\nabla u|
\leq K \rho^{n-1+\alpha}
\qquad \forall \ 0<\rho<1.
\end{align}
\end{proposition}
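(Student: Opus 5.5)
The approach is to exploit the radial structure through the classical test function $\xi=u_r\,\eta$ in the stability inequality (\ref{stable}), where $u_r=\partial_r u$ and $\eta=\eta(|x|)$ is a radial cutoff. Write $W:=\sqrt{1+u_r^2}$. Since $u$ is radial, $\nabla u=u_r\,x/|x|$, and the linearized operator at $u$ is
\[
L\phi=\operatorname{div}(A\nabla\phi),\qquad A=W^{-3}\ \text{in the radial direction},\quad A=W^{-1}\ \text{on tangential directions}.
\]

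\textbf{Step 1: equation for $u_r$.} Differentiating the radial ODE $(r^{n-1}u_r/W)'=-r^{n-1}f(u)$ with respect to $r$ (legitimate since $u\in W^{3,1}_{loc}$), I expect to obtain
\[
L(u_r)+f'(u)\,u_r=\frac{(n-1)\,u_r}{r^2\,W}.
\]
This is the mean-curvature analogue of $\Delta u_r+f'(u)u_r=(n-1)u_r/r^2$ for semilinear equations. The factor $1/W$ is the tangential coefficient of $A$, coming from the angular part of the Hessian.

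\textbf{Step 2: plug $\xi=u_r\eta$ into (\ref{stable}).} The quadratic form in (\ref{stable}) for radial $\xi$ is $\int A|\nabla\xi|^2=\int W^{-3}|\xi_r|^2$. Expanding $|(u_r\eta)_r|^2$, integrating by parts the cross term, and using Step 1 gives the standard identity
\begin{align*}
(n-1)\int \frac{u_r^2\,\eta^2}{r^2\,W}\,dx\;\le\;\int \frac{u_r^2}{W^3}\,|\eta'|^2\,dx\;\le\;\int|\eta'|^2\,dx .
\end{align*}
The last inequality uses $u_r^2/W^3\le 1$. This uniform bound on the right-hand side, independent of $u$ and $f$, is what makes $K$ depend only on $(n,\alpha)$. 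On the left, $u_r^2/W\ge |u_r|/\sqrt2$ wherever $|u_r|\ge1$, while the set where $|u_r|<1$ contributes at most $|B_\rho|$.

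\textbf{Step 3: choice of $\eta$ and the main obstacle.} The delicate point is admissibility of $\xi=u_r\eta$ near the origin, where $u_r$ may be singular (we only know $u\in W^{3,1}_{loc}$, so $u_r$ need not be bounded at $0$). I would therefore take $\eta$ vanishing near $0$ and near $|x|=2\rho$, with a weight. Concretely, set $\eta=r^{-\gamma}\psi_\varepsilon(r)\chi(r/\rho)$, where:
\begin{itemize}
\item $\chi$ equals $1$ on $[0,1]$ and vanishes beyond $2$;
\item $\psi_\varepsilon$ is a cutoff vanishing on $[0,\varepsilon]$ and equal to $1$ on $[2\varepsilon,\infty)$;
\item $\gamma<(n-2)/2$ is chosen so that $|\nabla r^{-\gamma}|^2$ remains integrable near $0$.
\end{itemize}
The contribution of $\psi_\varepsilon'$ to $\int|\eta'|^2$ is $O(\varepsilon^{n-2-2\gamma})\to0$, while the left side increases as $\varepsilon\to0$ by monotone convergence. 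This justifies the limit and yields
\[
\int_{B_\rho}\frac{u_r^2}{W}\,r^{-2-2\gamma}\,dx\le C(n,\gamma)\,\rho^{\,n-2-2\gamma}.
\]
Multiplying by $\rho^{2+2\gamma}$ and adding the $|B_\rho|$ term gives $\int_{B_\rho}|\nabla u|\le C\rho^{n}$ in the regime $\gamma$ close to $(n-2)/2$. Taking the allowed $\gamma$ as large as possible, with the constant blowing up as $\gamma\uparrow(n-2)/2$, produces exactly a loss of the form $\rho^{n-1+\alpha}$ with $K=K(n,\alpha)$. This is how I expect the statement's exponent to arise if the naive $\gamma=0$ argument cannot be justified at the origin.

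Summing up, the hardest step is the rigorous passage $\varepsilon\to0$ in Step 3. It requires integrability of $u_r^2\eta\eta'/W^3$-type boundary terms near $0$ using only $W^{3,1}$, together with the a priori finiteness of the left-hand side. I would handle it by working first on annuli $\{\varepsilon<r<2\rho\}$, where $u$ is a genuine $W^{3,1}$ radial ODE solution and hence $C^2$ in $r$. Then I would let $\varepsilon\to0$, using that every right-hand-side term is bounded by $\int|\eta'|^2$ uniformly in $u$.
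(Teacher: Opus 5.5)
Your argument is correct, and it reaches the statement by a different and in fact sharper route than the paper.

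\textbf{Comparison with the paper.} The paper specializes its general Lemma~\ref{lemma key grad est}, which tests the weak equation with $\operatorname{div}(x(x\cdot\nabla u)\eta^2)$ and the stability inequality with $(x\cdot\nabla u)\eta$. It takes $\eta=|x|^{-a}\phi$ outside $B_\rho$, with $\phi$ cut off at the fixed scale $1$. For radial $u$, the negative part of $L_1+L_2+L_3$ is only bounded by $-K|x|^{-2a}$, which must be integrable at $0$, so $2a<n$. The choice $a=(n-1+\alpha)/2$ is exactly where $\rho^{n-1+\alpha}$ comes from.

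Since $x\cdot\nabla u=r\,u_r$, the paper's test function is your $u_r\eta$ with your $\eta$ equal to $r$ times the paper's. For radial $u$, the paper's inequality (\ref{f5c25d11d}) reduces exactly to your
\[
(n-1)\int\frac{u_r^2\eta^2}{r^2W}\le\int\frac{u_r^2}{W^3}|\eta'|^2 .
\]
Your derivation through the differentiated ODE is shorter but radial-specific; the paper's derivation also serves the nonradial Theorems~\ref{theorem regularity} and~\ref{theorem liouville}. Your real gain is in how the inequality is used. The bound $u_r^2/W^3\le 1$ together with a cutoff at scale $\rho$ turns the right-hand side into $\int|\eta'|^2$, independent of $u$ and $f$.

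\textbf{Correction to Step~3.} Your closing remark is mistaken: no loss has to be manufactured. Any fixed $\gamma\in[-1,\frac{n-2}{2})$ works, for instance $\gamma=-1$, i.e.\ $\eta=r\,\psi_\varepsilon(r)\chi(r/\rho)$, for every $n\ge 2$. Your inequality then gives
\[
\int_{B_\rho}|\nabla u|\,dx\le C(n)\,\rho^{n}\qquad\forall\ 0<\rho<1 .
\]
Since $\rho^n\le\rho^{n-1+\alpha}$ for $\rho<1$, this implies the proposition with $K$ independent of $\alpha$.

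Letting $\gamma\uparrow\frac{n-2}{2}$ only inflates the constant. The exponent $n-1+\alpha$ arises because the paper carries a scale-$1$ bound down to scale $\rho$ through the weight $|x|^{-a}$ with $2a<n$, not from any genuine obstruction.

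Likewise, your worry about boundary terms at the origin is unnecessary. On annuli, $u$ is a classical solution of the radial ODE, indeed $C^3$ in $r$. The only $\varepsilon$-dependent contribution is the $O(\varepsilon^{n-2-2\gamma})$ term coming from $\psi_\varepsilon'$.
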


Although estimate (\ref{ef434ad8b}) is a purely local condition at the single point $0$
and does not directly yield continuity through standard Morrey embeddings, the radial symmetry provides additional structure that allows one to control the behavior at the origin.

\begin{corollary}\label{corollary 1}
Under the assumptions of Proposition~\ref{proposition 1}, the origin is at most a removable singularity of $u$.
\end{corollary}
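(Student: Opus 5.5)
Write $u(x)=v(r)$ with $r=|x|$. Since $u\in W^{3,1}_{loc}(B_2)$, the profile $v$ is $C^1$ (indeed $W^{3,1}_{loc}$) on $(0,2)$, so the only possible bad point is the origin. The goal is to show that $v$ extends continuously to $r=0$ and that $u$ then solves (\ref{equation}) weakly across $0$, so that $0$ is removable. The natural route is to turn the integral estimate (\ref{ef434ad8b}) of Proposition~\ref{proposition 1} into a pointwise bound on $v'$ that is integrable at $0$, by way of the radial ODE.

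\emph{Step 1 (pointwise decay of $v'$).} In polar coordinates (\ref{ef434ad8b}) says
\[
\int_0^\rho |v'(s)|\,s^{n-1}\,ds\le K\rho^{n-1+\alpha},\qquad 0<\rho<1 .
\]
To get a pointwise bound I would use the radial form of (\ref{equation}):
\[
\Big(r^{n-1}\tfrac{v'}{\sqrt{1+v'^2}}\Big)'=-r^{n-1}f(v).
\]
Set $w:=v'/\sqrt{1+v'^2}$, so $|w|<1$ and $|w|\le|v'|$. A flux argument then helps. Integrate the ODE on $(\epsilon,r)$, test (\ref{weak equation}) against radial cutoffs, and use $f(u)\in L^1_{loc}$ together with $|w|<1$. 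This gives
\[
r^{n-1}w(r)=c_0-\int_0^r s^{n-1}f(v)\,ds,
\]
where $c_0=\lim_{\epsilon\to0}\epsilon^{n-1}w(\epsilon)$ exists and is in fact $0$, since $|\epsilon^{n-1}w|\le\epsilon^{n-1}$ and $n\ge2$. So the flux at the origin vanishes. This alone does not bound $w$ pointwise, because $f(v)$ is only locally integrable. Instead I would exploit the averaged bound. By (\ref{ef434ad8b}) and Chebyshev, for each dyadic scale $\rho$ there is $r\in(\rho/2,\rho)$ with $|v'(r)|\le C\rho^{\alpha-1}$. The missing ingredient is control of $v'$ between such good radii. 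For that I would re-derive, within the proof of Proposition~\ref{proposition 1}, the monotonicity-type information that stability gives in the radial case. Testing (\ref{stable}) with $\xi=v'(r)\eta(r)$, which is admissible since $u\in W^{3,1}$, should yield that $|v'|$ is essentially monotone, or at least satisfies a Harnack-type comparison across dyadic annuli. Granting this, one obtains $|v'(r)|\le Cr^{\alpha-1}$ for $0<r<1$.

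\emph{Step 2 (continuity).} With $\alpha>0$, the bound $|v'(r)|\le Cr^{\alpha-1}$ is integrable at $0$, so $v(0^+)=\lim_{r\to0}v(r)$ exists. Moreover $|v(r)-v(0^+)|\le Cr^\alpha/\alpha$. Hence $u$ extends to a function that is $C^{0,\alpha}$ near $0$, and in particular bounded there.

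\emph{Step 3 (removability).} It remains to check that (\ref{weak equation}) holds for test functions $\varphi\in C_c^\infty(B_2)$ whose support contains $0$. Take $\varphi(1-\eta_\epsilon)$, where $\eta_\epsilon$ is a cutoff equal to $1$ on $B_\epsilon$ and supported in $B_{2\epsilon}$ with $|\nabla\eta_\epsilon|\le C/\epsilon$. The error term is
\[
\int \frac{\nabla u\cdot\nabla\eta_\epsilon}{\sqrt{1+|\nabla u|^2}}\,\varphi .
\]
Since the integrand is bounded by $|\nabla\eta_\epsilon|$, this is at most $C\epsilon^{-1}\epsilon^n\to0$ because $n\ge2$. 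The term involving $f(u)\eta_\epsilon\varphi$ tends to $0$ by dominated convergence, since $f(u)$ is now bounded near $0$. Once $u$ is bounded, standard regularity for the mean curvature operator with bounded right-hand side upgrades $u$ further, to $C^{1,\beta}$ near $0$.

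The main obstacle is Step 1: passing from the averaged estimate (\ref{ef434ad8b}) to a pointwise bound on $v'$. If the monotonicity obtained from stability turns out to be unavailable, I would fall back on a direct argument. Estimate (\ref{ef434ad8b}) implies $\int_0^\rho|v'|\,ds\le C\rho^\alpha$ on dyadic pieces: bound $s^{n-1}\ge(\rho/2)^{n-1}$ on $(\rho/2,\rho)$ and sum the resulting geometric series. This shows $v'\in L^1(0,1)$, which by itself gives continuity of $v$ at $0$.
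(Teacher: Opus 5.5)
Your fallback is the paper's argument, and it proves the corollary. On $(\rho/2,\rho)$ one has $s^{n-1}\ge(\rho/2)^{n-1}$, so (\ref{ef434ad8b}) gives $\int_{\rho/2}^{\rho}|v'|\,ds\le C\rho^{\alpha}$. Summing over $\rho=2^{-k}r$ then gives $\int_0^r|v'|\,ds\le Cr^{\alpha}$, so $v(0^+)$ exists and $|v(r)-v(0^+)|\le Cr^{\alpha}$. The paper writes the same computation as $|u(\rho_2)-u(\rho_1)|\le C(\rho_2/\rho_1)^{n-1}\rho_2^{\alpha}$, followed by a Cauchy-sequence argument along $r_k=2^{-k}$. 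In the paper, ``removable'' means exactly that $\lim_{t\to0^+}u(t)$ exists. You should make the fallback your main argument.

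The route you present as primary in Step 1 has a genuine gap. Nothing you invoke shows that testing (\ref{stable}) with $\xi=v'\eta$ gives monotonicity of $|v'|$ or a Harnack comparison across dyadic annuli; that test produces an integral inequality, not a pointwise one. So the bound $|v'|\le Cr^{\alpha-1}$ is unsupported, and it is not needed anyway.

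Step 3 proves something already assumed: (\ref{weak equation}) holds on all of $B_2$, origin included, by hypothesis.

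The appeal to ``standard regularity'' for a $C^{1,\beta}$ upgrade is not justified as stated. A bounded right-hand side alone does not bound the gradient for the mean curvature operator: in one variable, $w=1-|x_1|$ has bounded derivative while $u'=w/\sqrt{1-w^2}$ is unbounded. In the radial case your own flux identity does the job instead. Once $v$ is bounded near $0$, $f(v)$ is bounded there, and $r^{n-1}w(r)=-\int_0^r s^{n-1}f(v)\,ds$ gives $|w(r)|\le\frac{r}{n}\sup_{(0,r)}|f(v)|$. Hence $v'$ is bounded near $0$ and tends to $0$.
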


In particular, any radially symmetric stable solution of \eqref{equation} admits a continuous representative at the origin.
This contrasts sharply with the semilinear case, where isolated singular stable solutions do exist.
Determining the sharpness of $p_n$ for $n\geq 6$ remains an interesting open problem, potentially requiring new constructions of singular stable solutions.

\subsection{Liouville-type Theorem}

For the global case $\Omega = \mathbb{R}^n$, we may give the following Liouville-type theorem:
\begin{theorem}\label{theorem liouville}
Let $f\in C^1(\mathbb{R})$
and $u$ be a $W_{loc}^{3,1}(\mathbb{R}^n)$ stable solution of equation (\ref{equation}).
Define
\begin{align*}
q_n :=
\left\{
\begin{array}{ll}
-1 \quad & \text{if}\ 2\leq n\leq 10, \\
-n/2+\sqrt{n-1}+1 \quad & \text{if}\ n\geq 11.
\end{array}
\right.
\end{align*}

\begin{enumerate}
\item
If $u$ is nonconstant, then
\begin{align}\label{liouville theorem nonconstant bound}
\fint_{B_{2R}\backslash B_{R}} |\nabla u|^2 dx \geq c R^{2q_n} \qquad \forall \ R\geq R_0
\end{align}
for some $c>0$ and $R_0>2$.
\item
In particular, if $u$ satisfies the growth condition
\begin{align}\label{liouville theorem rigidity condition}
|\nabla u(x)| = o(|x|^{q_n}), \qquad \text{as}\ |x|\rightarrow+\infty.
\end{align}
then $u$ must be a constant.
\end{enumerate}
\end{theorem}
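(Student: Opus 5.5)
We plan to derive the theorem from a weighted integral inequality obtained by testing the stability inequality (\ref{stable}) with a Sternberg--Zumbrun type test function $\xi=c\,\eta$. This is the same inequality presumably underlying Theorem~\ref{theorem regularity}, now with a power weight. Set $W:=\sqrt{1+|\nabla u|^2}$ and let $c$ be a geometric quantity such as $c=|\nabla u|/W^{\beta}$ or the tangential gradient of the graph. Differentiating the equation (allowed since $u\in W^{3,1}_{loc}$) shows that each $u_k$ solves the linearized equation $\operatorname{div}(A(\nabla u)\nabla u_k)+f'(u)u_k=0$, where $A$ is the matrix from the second variation. Plugging $\xi=|\nabla u|\eta$ (or a variant) into (\ref{stable}) cancels the $f'(u)$ term. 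What remains is a Simons/Sternberg--Zumbrun inequality
\begin{align*}
\int \frac{\mathcal{A}^2\,\eta^2}{W^{\gamma}}\,dx \;\leq\; \int \frac{|\nabla u|^2|\nabla\eta|^2}{W^{\gamma'}}\,dx ,
\end{align*}
where $\mathcal{A}^2\ge 0$ collects the second fundamental form terms of the level sets together with the tangential gradient of $|\nabla u|$ on the graph.

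We will improve this in the usual way. Use the refined Kato inequality $|\nabla^2 u|^2-|\nabla|\nabla u||^2\ge \tfrac{1}{n-1}|\nabla_T|\nabla u||^2$. Then test not with $|\nabla u|\eta$ but with $|\nabla u|^{1+s}\eta$ (multiplied by a power of $W$), and optimize over $s$. This optimization is the source of the threshold $\sqrt{n-1}$. The admissible range comes from requiring a quadratic in $s$ to stay positive, roughly $s^2<\dots$, i.e. $s<\sqrt{n-1}-1$ after absorbing the Kato gain. Choosing $\eta$ to be a cutoff on $B_{2R}$ with $|\nabla\eta|\lesssim R^{-1}$ gives, for nonconstant $u$, a lower bound on the left side on a fixed ball $B_{R_0}$, say $\delta>0$. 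Indeed, $\mathcal{A}\equiv 0$ on an open set would force $u$ to be one-dimensional with flat level sets; that case must be handled separately, see below. The resulting bound is
\begin{align*}
0<\delta\le C R^{-2}\int_{B_{2R}\setminus B_R}|\nabla u|^{2+2s}\,dx .
\end{align*}

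To get a bound in terms of $\fint|\nabla u|^2$ we instead choose $\eta=|x|^{-a}\phi_R$ with a power weight, in the spirit of Farina's weighted argument. We will optimize $a$ against $s$ and use $W\ge1$ to drop the $W$-powers in the favorable direction. Hardy-type absorption works precisely when $a$ lies below an $n$-dependent range. This yields $\int_{B_{2R}\setminus B_R}|\nabla u|^2\,dx\ge cR^{n+2q_n}$. In low dimensions the weight saturates at $q_n=-1$, i.e. plain $R^{n-2}$; the crossover to $-n/2+\sqrt{n-1}+1$ occurs where $-n/2+\sqrt{n-1}+1=-1$, i.e. $n=10$, which matches the statement and confirms the bookkeeping. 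Part (2) then follows: the growth condition (\ref{liouville theorem rigidity condition}) gives $\fint_{B_{2R}\setminus B_R}|\nabla u|^2=o(R^{2q_n})$, contradicting (\ref{liouville theorem nonconstant bound}).

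The main obstacle is twofold. The first is the one-dimensional degenerate case $\mathcal{A}\equiv0$, where the stability inequality gives no information from curvature. There we will argue directly: $u=g(e\cdot x)$, stability reduces to an ODE stability statement with $g'$ of fixed sign, and then $|g'|$ is monotone or bounded below at infinity, contradicting the decay $|\nabla u|=o(|x|^{q_n})$ with $q_n\le-1$. The second is ensuring that the mixed powers of $W$, which are absent in the semilinear case, do not spoil the exponent. I will first try to keep all $W$-factors on the left side so that $W\ge1$ can be used on the right.
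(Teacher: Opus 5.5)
There is a genuine gap in the step that is supposed to produce $\sqrt{n-1}$ and the exponent $q_n$ for $n\ge 5$. Testing the stability inequality with $\xi=|\nabla u|^{1+s}\eta$, $s\neq0$, no longer cancels $f'(u)$. If you eliminate $\int f'(u)|\nabla u|^{2+2s}\eta^2$ using the differentiated equation (shown here for $A=I$, $c=|\nabla u|$), you get
\begin{align*}
\int c^{2s}\eta^2\bigl(|D^2u|^2-(1+s^2)|\nabla c|^2\bigr)\,dx
\le\int c^{2+2s}|\nabla\eta|^2\,dx+2s\int c^{1+2s}\eta\,\nabla c\cdot\nabla\eta\,dx .
\end{align*}
The mean-curvature matrix gives the same structure, with $\operatorname{tr}(D^2u\,A\,D^2u)-(1+s^2)A\nabla c\cdot\nabla c$ in the bracket. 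For one-dimensional $u$ one has $|D^2u|^2=|\nabla c|^2$ exactly, so the bulk term equals $-s^2\int c^{2s}|\nabla c|^2\eta^2<0$ for every $s\neq0$.

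The Kato-type gain you quote only controls $|\nabla_T c|^2$; the Sternberg--Zumbrun identity even gives constant $1$ there. But $s^2|\nabla c|^2$ also contains $(\partial_\nu|\nabla u|)^2=u_{\nu\nu}^2$, which is tied to $\Delta u$ and hence to the arbitrary $f(u)$. A refined Kato inequality against the full $|\nabla|\nabla u||^2$ needs a trace-free Hessian, which you do not have.

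The power weight fails for the same reason. The choice $\eta=|x|^{-a}\phi_R$ produces a bulk term $a^2|x|^{-2a-2}|\nabla u|^2$ on the right. No Hardy inequality bounds this by $\int|x|^{-2a}\mathcal A^2$, since $\mathcal A$ vanishes on one-dimensional profiles while $|\nabla u|$ does not. What your route actually delivers is the unweighted bound $\int_{B_{2R}\setminus B_R}|\nabla u|^2\ge cR^2$ for non-one-dimensional $u$, i.e.\ $\fint_{B_{2R}\setminus B_R}|\nabla u|^2\ge cR^{2-n}$. This implies the bound of part (1) only when $n\le4$.

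The missing idea is to test with the generator of the scaling symmetry, $x\cdot\nabla u$, rather than with $|\nabla u|$. The paper uses $\xi=(x\cdot\nabla u)\eta$ in the stability inequality and $\varphi=\operatorname{div}\bigl(x(x\cdot\nabla u)\eta^2\bigr)$ in the weak equation. Combining the two cancels $f'(u)$, and integrations by parts remove all second derivatives, leaving
\begin{align*}
\int\frac{z\bigl((n-1)z+(n-2)\bigr)}{(1+z)^{3/2}}\,\eta^2\,dx\le\int(\text{terms containing }\nabla\eta)\,dx,\qquad z=|\nabla u|^2 .
\end{align*}
Its left side controls $|\nabla u|$ itself, with no one-dimensional degeneracy.

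With $\eta=|x|^{-a}\phi$, the extra bulk terms $L_1+L_2+L_3$ form a quadratic form in $(x\cdot\nabla u)^2$ and $z$ with discriminant $a^2z^2(a^2+4a+8-4n)$. This form is nonnegative for $a=(n-2)/2$ when $n\le10$. When $n\ge11$ it is nonnegative for $a=1+\sqrt{n-1}$, the root of $-a^2+2a+n-2=0$ that kills the leftover $z$-linear term. One then gets $\rho^{-2a}\int_{B_\rho}z^2(1+z)^{-3/2}\le CR^{-2a}\int_{B_{2R}\setminus B_R}z$, hence the rate $R^{2a-n}=R^{2q_n}$. That is where $\sqrt{n-1}$ and the threshold $n=10$ come from. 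Your check that $-n/2+\sqrt{n-1}+1=-1$ at $n=10$ only reads off the definition of $q_n$.

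Separately, your one-dimensional argument is wrong as stated: a stable monotone profile can have $g'\to0$ at both ends. That case is nevertheless harmless. $|\nabla u|$ is constant on the hyperplanes $\{e\cdot x=t\}$, so it cannot decay, and a slab estimate gives $\fint_{B_{2R}\setminus B_R}|\nabla u|^2\ge cR^{-1}\ge cR^{2q_n}$.
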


Our approach can also be applied to radial stable solutions to obtain a stronger version of Theorem~\ref{theorem liouville}.
\begin{theorem}\label{liouville theorem radial}
Let $f\in C^1(\mathbb{R})$ and $u$ be a $W_{loc}^{3,1}(\mathbb{R}^n)$ stable radial solution of equation (\ref{equation}).
\begin{enumerate}
\item
If $u$ is nonconstant, then
\begin{align}\label{liouville theorem radial nonconstant bound}
\fint_{B_{2R}\backslash B_{R}} \frac{|\nabla u|^2}{(1+|\nabla u|^2)^{3/2}} dx \geq c R^{-n+2\sqrt{n-1}+2} \qquad \forall \ R\geq R_0
\end{align}
for some $c>0$ and $R_0>2$.
\item
In particular, if one of the following condition is satisfied
\begin{itemize}
\item $2\leq n\leq 6$,
\item $n \geq 7$ and
\begin{align*}
|\nabla u(x)| = o(|x|^{-n/2+\sqrt{n-1}+1})\qquad \text{as}\ |x|\rightarrow+\infty,
\end{align*}
\item $n \geq 7$ and
\begin{align*}
|\nabla u(x)|^{-1} = o(|x|^{-n+2\sqrt{n-1}+2})\qquad \text{as}\ |x|\rightarrow+\infty,
\end{align*}
\end{itemize}
then $u$ must be a constant.
\end{enumerate}
\end{theorem}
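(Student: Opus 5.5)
The plan is to exploit the radial structure, so that the stability inequality (\ref{stable}) reduces to a one-dimensional Hardy-type inequality for the weight
\[
g(r):=\frac{u_r^2}{(1+u_r^2)^{3/2}}.
\]
Write $u=u(r)$. For radial $\xi$ the tangential directions drop out, since $|\nabla\xi|^2-(\nabla u\cdot\nabla\xi)^2/(1+|\nabla u|^2)=\xi_r^2/(1+u_r^2)$. Hence (\ref{stable}) becomes
\[
\int f'(u)\xi^2 \le \int a(r)\,\xi_r^2, \qquad a:=(1+u_r^2)^{-3/2}.
\]

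\textbf{Step 1: linearized equation for $u_r$.} The radial form of (\ref{equation}) is $\bigl(r^{n-1}u_r(1+u_r^2)^{-1/2}\bigr)'=-r^{n-1}f(u)$. Differentiating in $r$ (legitimate since $u\in W^{3,1}_{loc}$) gives
\[
\frac{1}{r^{n-1}}\bigl(r^{n-1}a\,u_{rr}\bigr)'+f'(u)u_r=\frac{n-1}{r^2}\,\frac{u_r}{\sqrt{1+u_r^2}}.
\]

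\textbf{Step 2: test with $\xi=u_r\eta$.} Take $\eta$ radial and Lipschitz with compact support, and plug $\xi=u_r\eta$ into the stability inequality. Integrating by parts against Step 1 leaves, in the usual way,
\[
(n-1)\int\frac{u_r^2\eta^2}{r^2\sqrt{1+u_r^2}} \le \int a\,u_r^2\eta_r^2 .
\]
Since $(1+u_r^2)^{-1/2}\ge(1+u_r^2)^{-3/2}$, this implies
\[
\int_0^\infty g\Bigl[(n-1)\frac{\eta^2}{r^2}-\eta_r^2\Bigr]r^{n-1}\,dr\le 0.
\]

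The main technical obstacle is justifying Step 2 near the origin and at the regularity $W^{3,1}$: $u_r$ and the $r^{-2}$ term are singular at $0$. I would handle this by choosing $\eta\equiv1$ on $[0,1]$, working with $\xi=(\nabla u\cdot x/|x|)\,\eta$ in Cartesian coordinates, and approximating by mollification. Alternatively, one can differentiate the equation in each $x_i$, test with $\partial_iu\,\eta$ and sum, which produces the same angular term $(n-1)u_r^2/r^2$.

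\textbf{Step 3: choice of $\eta$.} Set $\gamma=\sqrt{n-1}$ and
\[
\eta=\begin{cases}1, & r\le1,\\ r^{-\gamma}, & 1\le r\le R,\\ r^{-\gamma}\psi(r), & r\ge R,\end{cases}
\]
where $\psi$ is a cutoff with $\psi=1$ on $[0,R]$, $\psi=0$ beyond $2R$, $\psi'\le0$ and $|\psi'|\le C/R$. The contributions of the three regions to the bracket in Step 2 are as follows.
\begin{itemize}
\item On $[0,1]$ the bracket equals $(n-1)/r^2>0$, contributing $c_0:=(n-1)\int_0^1 g\,r^{n-3}\,dr$.
\item On $[1,R]$ the bracket equals $\bigl((n-1)-\gamma^2\bigr)r^{-2\gamma-2}=0$.
\item On $[R,2R]$ the bracket equals $r^{-2\gamma-2}\bigl[2\gamma r\psi\psi'-r^2\psi'^2\bigr]\ge -C\,r^{-2\gamma}R^{-2}$.
\end{itemize}
Therefore
\[
c_0\le C R^{-2\gamma-2}\int_R^{2R} g\,r^{n-1}\,dr,
\]
which gives
\[
\fint_{B_{2R}\setminus B_R} g \;\ge\; c\,R^{-n+2+2\sqrt{n-1}}.
\]
If $c_0=0$, I would rescale so that the unit ball is replaced by a ball $B_{r_0}$ on which $u_r\not\equiv0$; such an $r_0$ exists because $u$ is nonconstant. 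This yields (\ref{liouville theorem radial nonconstant bound}) for $R\ge R_0$ and proves part (1).

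\textbf{Step 4: part (2) by contradiction.} Suppose $u$ is nonconstant, so that (\ref{liouville theorem radial nonconstant bound}) holds.
\begin{itemize}
\item If $2\le n\le6$, the exponent $-n+2+2\sqrt{n-1}$ is positive (for $n=6$ it equals $2\sqrt5-4>0$). But $g\le\sup_{s\ge0}s^2(1+s^2)^{-3/2}<\infty$, which is incompatible with growth of the averages.
\item If $n\ge7$ and $|\nabla u|=o(r^{-n/2+\sqrt{n-1}+1})$, then $g\le u_r^2=o(r^{-n+2\sqrt{n-1}+2})$, contradicting part (1).
\item If $n\ge7$ and $|\nabla u|^{-1}=o(r^{-n+2\sqrt{n-1}+2})$, then $g\le|u_r|^{-1}=o(r^{-n+2\sqrt{n-1}+2})$, again a contradiction.
\end{itemize}
In every case $u$ must be constant.
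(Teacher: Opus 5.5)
Your proof is correct. It is the paper's argument written in radial coordinates, but you reach the key inequality by a different derivation.

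\textbf{Where the two coincide.} On the annulus your test function $u_r\,r^{-\sqrt{n-1}}\psi$ is exactly the paper's $\xi=(x\cdot\nabla u)|x|^{-a}\phi$ with $a=\gamma+1=\sqrt{n-1}+1$. Your Step~2 inequality is precisely the radial form of (\ref{f5c25d11d}) after substituting $\eta_{\mathrm{paper}}=\eta/r$. Before you use $(1+u_r^2)^{-1/2}\ge(1+u_r^2)^{-3/2}$, your integrand on $[1,R]$ equals $\bigl((n-1)z^2+(n-1-\gamma^2)z\bigr)r^{-2\gamma-2}(1+z)^{-3/2}$. This is what the paper's $L_1+L_2+L_3$ actually sum to for radial $u$; the coefficient $n-2$ printed in (\ref{radial L1+L2+L3}) should be $n-1$, which is harmless. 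The choice $\gamma^2=n-1$, the estimate on $[R,2R]$, and your Step~4 all match the paper.

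\textbf{Where they differ.} The paper proves the general Lemma~\ref{lemma key grad est} by testing the weak equation with the Pohozaev-type function $\operatorname{div}(x(x\cdot\nabla u)\eta^2)$. That is what also yields the nonradial Theorem~\ref{theorem liouville}. You instead differentiate the radial ODE and obtain the good term from the commutator of $\partial_r$ with the radial operator. Your route is shorter and fully elementary, but it is tied to the symmetry.

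\textbf{The inner region.} The paper takes $\xi=\rho^{-a}\,x\cdot\nabla u$ on $B_\rho$, which carries an extra factor $|x|$. Your choice $\eta\equiv1$ near $0$ is exactly what creates the $r^{-2}$ weight you worry about. In the radial setting this is easier to handle than mollification suggests:
\begin{enumerate}
\item On $(0,\infty)$ the profile is a $W^{3,1}_{loc}$ solution of an ODE, so Steps~1--2 are classical for $\eta$ supported away from $0$.
\item Since $g\le 2\cdot 3^{-3/2}$, a linear cutoff on $[\epsilon/2,\epsilon]$ costs only $O(\epsilon^{n-2})$; for $n=2$ use a logarithmic cutoff instead.
\item Monotone convergence as $\epsilon\to0$ then recovers your $c_0$.
\end{enumerate}

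Finally, when $c_0=0$ you do not need to rescale the equation. Put the plateau on $[0,r_0]$ and use $(r/r_0)^{-\gamma}$ beyond it.
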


In this field, Farina and Navarro studied the stable radial solution of equation (\ref{equation}).
They proved that:
\begin{theorem}[Theorem 1.2 in \cite{Farina Navarro DCDS 2020}]\label{liouville theorem Farina}
Let $u$ be a $C^{2}(\mathbb{R}^n)$ stable radial solution of equation (\ref{equation}).
Define
\begin{align*}
\overline{q}_n=
\left\{
\begin{array}{ll}
-n+2\sqrt{n-1}+3 \quad & \text{if}\ 2\leq n\leq 6, \\
-n/2+\sqrt{n-1}+2 \quad & \text{if}\ n\geq 7.
\end{array}
\right.
\end{align*}

\begin{enumerate}
\item
If $u$ is nonconstant, then
\begin{align*}
|u(R)| \geq c R^{\overline{q}_n}
\end{align*}
for some $c>0$ and $R_0>2$.
\item
In particular, if $u$ satisfies the growth condition
\begin{align*}
|u(x)| = o(|x|^{\overline{q}_n}), \qquad \text{as}\ |x|\rightarrow+\infty.
\end{align*}
then $u$ must be a constant.
\end{enumerate}
\end{theorem}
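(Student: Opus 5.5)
The plan is to follow the Cabré–Capella strategy for radial stable solutions. We use the radial derivative $u_r$ as the main object. We insert $\xi=u_r\eta$ into the stability inequality (\ref{stable}). This produces a weighted Hardy-type inequality, which forces $u_r$ to be small on large annuli. Integrating in $r$ then gives the lower growth bound for $|u(R)|$.

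\textbf{Step 1 (equation for $u_r$).} Write $u=u(r)$ and $\phi(s)=s/\sqrt{1+s^2}$, so that (\ref{equation}) reads $(r^{n-1}\phi(u_r))'=-r^{n-1}f(u)$. Differentiating in $r$ shows that $v=u_r$ solves the linearized equation $\operatorname{div}(A(\nabla u)\nabla v)+f'(u)v=\frac{n-1}{r^2}\frac{v}{\sqrt{1+v^2}}$, where $A(p)=(I-\frac{p\otimes p}{1+|p|^2})(1+|p|^2)^{-1/2}$. The right-hand side comes from the tangential eigenvalue $(1+u_r^2)^{-1/2}$ of $A$. The radial eigenvalue is $(1+u_r^2)^{-3/2}$.

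\textbf{Step 2 (key inequality).} Take $\xi=u_r\eta$ with $\eta=\eta(r)$ Lipschitz and compactly supported; a cutoff near $0$ is harmless because $u\in C^2$. Expand the quadratic form in (\ref{stable}) and integrate by parts using Step 1. The terms involving $\eta\,\nabla\eta$ cancel, and we obtain
\begin{align*}
(n-1)\int_{\mathbb{R}^n}\frac{u_r^2\,\eta^2}{r^2\sqrt{1+u_r^2}}\,dx\le\int_{\mathbb{R}^n}\frac{u_r^2\,|\eta'|^2}{(1+u_r^2)^{3/2}}\,dx .
\end{align*}

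\textbf{Step 3 (choice of $\eta$).} Take $\eta=r^{-\beta}$ on $B_R\setminus B_{r_0}$, with $\beta<\sqrt{n-1}$. Let $\eta$ be linear in $r$ on $B_{2R}\setminus B_R$, vanish outside $B_{2R}$, and be truncated suitably near the origin. Since $(1+u_r^2)^{-3/2}\le(1+u_r^2)^{-1/2}$, the pure power part of the right side is absorbed by the left side because $\beta^2<n-1$. What remains is an estimate of the form
\begin{align*}
\int_{B_{r_0}^c\cap B_R}\frac{u_r^2\,r^{-2\beta-2}}{\sqrt{1+u_r^2}}\le C+CR^{-2\beta-2}\int_{B_{2R}\setminus B_R}\frac{u_r^2}{(1+u_r^2)^{3/2}} .
\end{align*}

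\textbf{Step 4 (growth of $u$).} The two weights lead to the two regimes.

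\emph{Case $u_r$ unbounded along a sequence.} Here the factor $u_r^2(1+u_r^2)^{-3/2}\lesssim\min(u_r^2,|u_r|^{-1})$ is small. This gives a lower bound of the form $\fint_{B_{2R}\setminus B_R}\frac{u_r^2}{(1+u_r^2)^{3/2}}\ge cR^{2\beta+2-n}$ for nonconstant $u$, after letting $\beta\uparrow\sqrt{n-1}$ (the analogue of (\ref{liouville theorem radial nonconstant bound})).

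\emph{Converting to $|u(R)|$.} Use monotonicity in $r$: integrating the ODE shows that $u_r$ eventually has a fixed sign. Then apply Cauchy–Schwarz or Hölder on the annulus, converting the bound on $u_r$ into $|u(2R)-u(R)|\gtrsim R\cdot(\text{average of }|u_r|)$. In the regime where $|u_r|$ is small we have $\frac{u_r^2}{(1+u_r^2)^{3/2}}\approx u_r^2$, and this gives the exponent $-n/2+\sqrt{n-1}+2$. In the other regime we use $\frac{u_r^2}{(1+u_r^2)^{3/2}}\le|u_r|^{-1}$ to get a lower bound on $|u_r|^{-1}$, which instead yields a bound that becomes effective only for $n\le6$, giving $-n+2\sqrt{n-1}+3$.

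\textbf{Main obstacle.} I expect the hardest part to be this last conversion. We must show that $u_r$ cannot oscillate in a way that breaks the passage from annular integral bounds to the pointwise bound on $|u(R)|$. We also need to justify the case split: for $n\le6$ the exponent $-n+2\sqrt{n-1}+3$ dominates, while for $n\ge7$ the exponent $-n/2+\sqrt{n-1}+2$ does. For the first of these I would use the sign of $r^{n-1}\phi(u_r)$ together with the monotonicity it inherits from the ODE.
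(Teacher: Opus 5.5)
\textbf{Context.} The paper does not prove this statement. It is quoted from Farina--Navarro for comparison only. The closest argument in the paper is the proof of Theorem~\ref{liouville theorem radial}, and your Steps 1--3 coincide with it. The paper's test function $\xi=(x\cdot\nabla u)|x|^{-a}\phi=u_r\,r^{1-a}\phi$ with $a=\sqrt{n-1}+1$ is your $u_r\eta$ with $\beta=a-1=\sqrt{n-1}$, and your Step 2 inequality is correct.

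You do not need $\beta<\sqrt{n-1}$ and a limit. At $\beta=\sqrt{n-1}$ the absorption still leaves $(n-1)\int u_r^4\eta^2 r^{-2}(1+u_r^2)^{-3/2}\,dx\ge0$ on the left. Letting $\beta\uparrow\sqrt{n-1}$ instead (your constants carry the factor $n-1-\beta^2$) only yields $\overline q_n-\varepsilon$, and it loses the logarithm at $n=10$. The paper stops at the annular bound (\ref{44883fc13}). Everything specific to this statement lies in your Step 4, and that is where the proposal has a genuine gap.

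\textbf{The gap in Step 4.} First, the eventual sign of $u_r$ does not come from the ODE. Since $(r^{n-1}\phi(u_r))'=-r^{n-1}f(u)$ and $f$ has no sign, $r^{n-1}\phi(u_r)$ is not monotone. The sign comes from stability: if $u_r(r_1)=u_r(r_2)=0$ with $u_r\not\equiv0$ on $(r_1,r_2)$, the Lipschitz test function $\xi=u_r\chi_{(r_1,r_2)}$ makes the quadratic form in (\ref{stable}) equal to $-(n-1)|S^{n-1}|\int_{r_1}^{r_2}\frac{u_r^2}{\sqrt{1+u_r^2}}r^{n-3}dr<0$.

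Second, and this is the real gap, a linear cutoff only controls an arithmetic mean: $\int_R^{2R}g(u_r^2)\,dr\gtrsim R^{\theta}$, with $g(z)=z(1+z)^{-3/2}$ and $\theta=2\sqrt{n-1}+3-n$. For $n\ge7$ the target is $\int_R^{2R}|u_r|\,dr\gtrsim R^{(\theta+1)/2}$, since $(\theta+1)/2=-n/2+\sqrt{n-1}+2$. Going through $g(z)\le z$, this would need the reverse Cauchy--Schwarz inequality $\int_R^{2R}|u_r|\gtrsim(R\int_R^{2R}u_r^2)^{1/2}$, and nothing in your argument supplies it. A profile that is tiny except for a bump of height $\sim1$ and width $R^\theta$ satisfies your lower bound. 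Yet it has $\int_R^{2R}|u_r|\sim R^\theta\ll R^{(\theta+1)/2}$, because $\theta<1$ when $n\ge7$.

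Splitting into small and large $|u_r|$ does not help. The $|u_r|^{-1}$ regime points the wrong way, since a lower bound on $\int|u_r|^{-1}$ gives no lower bound on $|u|$. For $n\le6$ the right inequality is $g(z)\le\sqrt z=|u_r|$, which gives $\int_R^{2R}|u_r|\ge\int_R^{2R}g\gtrsim R^\theta$ at once. Moreover $g\le 2\cdot3^{-3/2}$ and $\theta>1$ there, so this shows no nonconstant solution exists: part (1) is vacuous for $n\le6$, as in Theorem~\ref{liouville theorem radial}.

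\textbf{What is missing for $n\ge7$.} You need a harmonic-mean bound, and stability gives it only if the cutoff is adapted to $u_r$. On $[R,2R]$, where $u_r\neq0$, replace the linear piece by the energy-minimizing profile $\eta(t)=R^{-\beta}\int_t^{2R}w^{-1}\big/\int_R^{2R}w^{-1}$ with $w=t^{n-1}g(u_r^2)$. Its energy is $R^{-2\beta}\big/\int_R^{2R}w^{-1}$.

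With $\beta=\sqrt{n-1}$ and $\eta=r^{-\beta}$ kept inside, this gives $\int_R^{2R}w^{-1}\le CR^{-2\sqrt{n-1}}$. Since $1/g(z)\ge 1/z$, it follows that $\int_R^{2R}u_r^{-2}\,dt\le CR^{n-1-2\sqrt{n-1}}$. H\"older's inequality, $R\le(\int_R^{2R}|u_r|)^{2/3}(\int_R^{2R}u_r^{-2})^{1/3}$, then yields $\int_R^{2R}|u_r|\ge cR^{-n/2+\sqrt{n-1}+2}$. Combined with the fixed sign of $u_r$, this gives the claim.
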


It is instructive to compare Theorems~\ref{theorem liouville}, \ref{liouville theorem radial} with the classical Liouville Theorem~\ref{liouville theorem Farina} of Farina and Navarro
\footnote{We note that their result distinguishes the case $n=10$ (corresponding to $\overline{q}_{10}=0$) where a logarithmic lower bound appears. For the convenience of discussion we consider logarithm as a 0-power.}.
Both results provide polynomial bounds for stable solutions.
Since our results concern $\nabla u$ while Theorem~\ref{liouville theorem Farina} provides estimates for $u$, we compare our exponents with $\overline{q}_n - 1$ to account for the order of differentiation.

We first compare
the radial case.
Theorem~\ref{liouville theorem radial} yields a stronger result.
In dimensions $n\geq 7$, the lower bound in (\ref{liouville theorem radial nonconstant bound}) recovers precisely the critical exponent $\overline{q}_n - 1$ appearing in Theorem~\ref{liouville theorem Farina}.
Moreover, in contrast to previous results, Theorem~\ref{liouville theorem radial} also provides an upper growth condition, leading to a two-sided Liouville-type characterization for radial stable solutions.

Most notably, in low dimensions $2\leq n\leq 6$, our result shows that no nonconstant radial stable solutions exist at all, without imposing any growth assumption at infinity.
This phenomenon does not follow from either Theorem~\ref{theorem liouville} or Theorem~\ref{liouville theorem Farina}, and highlights a global rigidity of stable radial solutions in low dimensions.

We now turn to compare
Theorem~\ref{theorem liouville} with Theorem~\ref{liouville theorem Farina}.
As we explain before, Theorem~\ref{liouville theorem radial} shows that, for $2\leq n\leq 6$, every stable radial solution must be constant.
Consequently, in this dimensional range, any Liouville-type result relying on polynomial growth assumptions, such as Theorem~\ref{liouville theorem Farina}, cannot be optimal.
For this reason, the comparison of growth exponents is only meaningful in dimensions $n\geq 7$.
Up to a loss of one order in the exponent,
a direct computation shows that
\begin{align*}
q_n
\left\{
\begin{array}{l}
< \overline{q}_n - 1 \qquad \text{when}\ 7\leq n\leq 9,\\
= \overline{q}_n - 1 \qquad \text{when}\ n\geq 10.
\end{array}
\right.
\end{align*}
Hence the growth bound in (\ref{liouville theorem nonconstant bound}) is slightly weaker than that of Farina and Navarro when $7\leq n\leq 9$.
However, Theorem~\ref{theorem liouville} applies to general (possibly nonradial) stable solutions,
while Theorem~\ref{liouville theorem Farina} is restricted to the radial setting.

It is also natural to compare our Liouville-type results with the corresponding ones for the semilinear equation (\ref{semilinear equation}),
whose global stable solutions have been extensively studied.

When $n \geq 11$, an interesting coincidence occurs:
the growth lower bounds obtained in our results for general stable solutions in Theorem~\ref{theorem liouville} and for radial stable solutions in Theorem~\ref{liouville theorem radial} coincide with those known for the semilinear equation (\ref{semilinear equation}),
both in the radial setting \cite{Villegas JMPA 2007} and nonradial setting \cite{Peng Zhang Zhou AnalPDE 2024},
after adjusting for the order of differentiation.
In this sense,
in high dimensions the critical growth rates prescribed by the two equations are identical.

In lower dimensions, the situation becomes more delicate.
For general stable solutions of the semilinear equation (\ref{semilinear equation}) with $f\geq 0$,
Dupaigne and Farina \cite{Dupaigne Farina 2022 AnalPDE} showed that if $n \leq 9$ and $u(x) \geq -C\log(2+|x|)$,
or if $ n=10 $ and $u \geq -C$ for some constant $ C>0 $,
then $u$ must be constant.
If one regards logarithmic growth as a zero-order behavior, this threshold is consistent with the exponent $q_n+1$ appearing in our Liouville theorem for general solutions. In this sense, the growth bounds for general stable solutions in low dimensions still coincide for the two equations.

A genuine difference arises when restricting to radial solutions in low dimensions. Our results show that, for $2 \leq n \leq 6$, equation (\ref{equation}) admits no nontrivial global radial stable solutions.
In contrast, the examples constructed by Villegas \cite{Villegas JMPA 2007} show that, for the semilinear equation (\ref{semilinear equation}), nontrivial radial stable solutions exist for $2 \leq n \leq 9$.
This discrepancy may suggest the existence of nonradial global stable solutions of (\ref{equation}) in low dimensions exhibiting behavior distinct from the radial ones. At present, however, we do not have the analytical tools to construct such solutions. For examples of stable radial solutions of the equation (\ref{equation}), we refer to \cite{Farina Navarro DCDS 2020}.

\subsection{Some History}

The appearance of a critical dimension in our results is not accidental.
Similar dimension-dependent phenomena have played a central role in the study of the mean curvature operator and minimal surfaces.

A classical example is the minimal surface equation
\begin{align}\label{minimal surface equation}
\operatorname{div}\left( \frac{\nabla u}{\sqrt{1+|\nabla u|^2}} \right) = 0
\quad \text{in}\ \mathbb{R}^n,
\end{align}
which corresponds to minimal graphs in \(\mathbb{R}^{n+1}\).

Bernstein \cite{Bernstein MathZ 1927} showed that in dimension \(n=2\),
every entire solution of (\ref{minimal surface equation}) must be affine linear function.
Through a sequence of fundamental works by Fleming \cite{Fleming RCMP 1962},
De Giorgi \cite{DeGiorgi ASNSP 1965},
Almgren \cite{Almgren 1966 Annals},
and Simons \cite{Simons Annals 1968},
this rigidity result was extended to all dimensions \(n \leq 7\).
In sharp contrast, for \(n \geq 8\), Bombieri, De Giorgi, and Giusti \cite{Bombieri Giorgi Giusti Invent 1969}
constructed non-affine entire minimal graphs, showing that the Bernstein property fails in high dimensions.

A classical insight, originating in the work of Fleming \cite{Fleming RCMP 1962} and De Giorgi \cite{DeGiorgi ASNSP 1965}, links entire minimal graphs to minimal cones:
the falsity of the extension of the Bernstein theorem in $\mathbb{R}^n$ would imply, via a blow-down procedure, the existence of a minimal cone in $\mathbb{R}^{n-1}$.
This observation reduces the Bernstein problem for entire graphs
to the question of existence of singular minimal cones.
It is known that there exist no nontrivial codimension-one stable minimal cones in $\mathbb{R}^{n}$ for $n\leq 7$. The fundamental example appears in dimension $n=8$: the Simons cone
$$\{x\in\mathbb{R}^{8} : x_1^2+x_2^2+x_3^2+x_4^2 = x_5^2+x_6^2+x_7^2+x_8^2 \},$$
which is stable (in fact, area-minimizing; see \cite{Simons Annals 1968, Bombieri Giorgi Giusti Invent 1969}).

This geometric picture has a natural PDE counterpart in the regularity theory
of stable solutions.
Just as stable minimal hypersurfaces in $\mathbb{R}^{n+1}$ may develop singularities when $n\geq 8$,
the question whether they remain smooth for all $n\leq 7$ has been a central open problem in geometric analysis.
The case $n=3$ was settled affirmatively by Fischer-Colbrie and Schoen \cite{Fischer-Colbrie Schoen CPAM 1980} and by do Carmo and Peng \cite{do-Carmo Peng BAMS 1979}.
More recently, substantial progress has been made in higher dimensions.
Chodosh and Li \cite{Chodosh Li Acta 2024} proved that any complete, two-sided, stable minimal hypersurface in $\mathbb{R}^{4}$ must be a hyperplane;
Very recently, Bellettini \cite{Bellettini Invent 2025} obtained an alternative proof of the Schoen-Simon-Yau curvature estimates and extended the associated Bernstein-type theorems to stable minimal hypersurfaces in $\mathbb{R}^{n+1}$ for $n\leq 6$.

Consider the Allen-Cahn equation
\begin{align}\label{Allen-Cahn equation}
\Delta u = u^{3} - u \qquad \text{in}\ \mathbb{R}^n.
\end{align}

By the $\Gamma$-convergence results of Modica and Mortola \cite{Modica Mortola BUMIB 1977} and by Kohn and Sternberg \cite{Kohn Sternberg PRSES 1989},
the level sets of solutions converge (under suitable scaling) to minimal hypersurfaces.
Combined with the dimension-dependent regularity theory for stable minimal hypersurfaces,
this variational picture led De Giorgi to formulate the following conjecture in 1979.

\begin{conjecture}[De Giorgi]
Let $u$ be a bounded solution of (\ref{Allen-Cahn equation}) in $\mathbb{R}^n$ that is monotone in one direction ($u_{x_n}>0$). If $n\leq 8$, then $u$ is one-dimensional.
\end{conjecture}

This conjecture has been fully resolved for $n \leq 8$:
it holds for $n=2$ (see \cite{Ghoussoub Gui MathAnn 1998}),
$n=3$ (see \cite{Ambrosio Cabre JAMS 2000} ),
and $n=4,\dots,8$ (see \cite{Savin 2009 Annals}).
Counterexamples exist for $n \geq 9$ (see \cite{del Pino Kowalczyk Wei 2011 Annals}).

In recent years, increasing attention has been devoted to symmetry questions for stable solutions,
often referred as \emph{stable De Giorgi problem} (see the survey by Chan and Wei \cite{Chan Wei SciChianMath 2018}),
asks whether bounded stable solutions to semilinear equations such as $\mathop{L}u = f(u)$ in $\mathbb{R}^n$ must be one-dimensional in low dimensions.
Here the stability condition is motivated by the fact that monotone solutions of such equations are always stable (see \cite[Corollary 4.3]{Alberti Ambrosio Cabre 2001 ActaAppl.Math}).
Significant progress has been made on this stable version:

\begin{itemize}
\item
For the classical Laplacian and $f=u-u^3$,
the one-dimensional symmetry of stable solutions holds in \(n=2\)
(see \cite{Berestycki Caffarelli Nirenberg ASNSP 1997, Ghoussoub Gui MathAnn 1998, Ambrosio Cabre JAMS 2000}),
while counterexamples exist in dimension $n=8$
(see \cite{Pacard Wei JFA 2013, Liu Wang Wei JMPA 2017}).

\item
For fractional Laplacian $(-\Delta)^{s}$,
the symmetry result was proved for
$n=2$,
$s=1/2$ and $f=u-u^3$ in
\cite{Cabre Sola CPAM 2005},
and later for $s\in(0,1)$ in
\cite{Sire Valdinoci JFA 2009, Dipierro Serra Valdinoci AJM 2020}.
Figalli and Serra \cite{Figalli Serra Invent 2020} proved the case $n=3$, $s=1/2$ and $f\in C^{0,\alpha}$.

\item
For a large class of quasilinear elliptic operators (including the mean curvature operator)
and locally Lipschitz function $f$,
the symmetry result was proved in $n=2$ by
Farina, Sciunzi and Valdinoci
\cite{Farina Sciunzi Valdinoci ASNSP 2008}.

\end{itemize}

These results suggest that one-dimensional symmetry may be more natural than radial symmetry for stable solutions in low dimensions.
This observation motivates our approach in low dimensions:
\begin{itemize}
\item
we focus on the construction of stable solutions exhibiting one-dimensional symmetry,
rather than radial symmetry in Theorem~\ref{theorem bad regularity solution};

\item
we establish nonexistence result for nonconstant radial stable solutions in Theorem~\ref{liouville theorem radial}.
\end{itemize}

The paper is organized as follows. In Section 2 we establish a key technical lemma that forms the foundation of the subsequent analysis. Section 3 is devoted to the proof of the main regularity theorem. In Section 4 we prove the corresponding Liouville-type theorem. Finally, the Appendix contains the proof of a technical auxiliary lemma used in the main arguments.

\paragraph{\bf Notation.}
We write $\nabla u$ for the gradient and $D^2 u = (u_{ij})$ for the Hessian matrix.
We set $z := |\nabla u|^2$. The symbol $\delta_{ij}$ stands for the Kronecker delta.
Subscripts on functions denote partial derivatives with respect to the spatial variables, while subscripts on $x = (x_1,\dots,x_n)$ indicate its components.
The letter $C$ denotes a positive constant that may vary from line to line.
$B_\rho(x) $ denotes the open ball of radius $\rho$ centered at $ x $, and $B_\rho := B_\rho(0)$.

\section{A Key Gradient Estimation}

The purpose of this section is to establish a gradient estimate (Lemma~\ref{lemma key grad est} below) that serves as the common foundation for both our interior regularity theorem~\ref{theorem regularity} and the Liouville-type theorem~\ref{theorem liouville}.

The estimate (\ref{key grad est}) is scale-adapted in a strong sense that once a suitable lower bound on the remainder term $L_1+L_2+L_3$ is available under the dimensional condition, the inequality yields both results by different limiting procedures:

\begin{itemize}
\item
Local regularity (Morrey-type): fix \(R > 0\) and let \(\rho \to 0\) to obtain a scale estimate for $\nabla u$;
\item
Global rigidity (Liouville-type): fix \(\rho > 0\) and let \(R \to \infty\), forcing \(\nabla u \equiv 0\) on \(\mathbb{R}^n\) under appropriate conditions.
\end{itemize}

We now state the precise result. For notational convenience, we set $z = |\nabla u|^2$.

\begin{lemma}\label{lemma key grad est}
Let $a>0$,
$0<\rho<R$.
Suppose $u$ is a $W^{3,1}_{loc}(B_{2R})$ stable solution of (\ref{equation}).
Let \(\phi\in C_c^\infty(B_{2R})\) satisfy \(\phi\equiv 1\) in \(B_\rho\).
Then $u$ satisfies
\begin{align}\label{key grad est}
&\frac{C}{\rho^{2a}} \int_{B_\rho} \frac{z^2}{(1+z)^{3/2}} \phi^2 dx
+ \int_{B_{2R}\backslash B_\rho} (L_1+L_2+L_3) \phi^2 dx \notag\\
&\leq
\int_{B_{2R}\backslash B_\rho}
\bigg(
-\frac{(x \cdot \nabla u )^2(\nabla u \cdot \nabla \phi)^2}{|x|^{2a}(1+z)^{3/2}}
+ \frac{(x \cdot \nabla u )^2 |\nabla \phi|^2}{|x|^{2a}\sqrt{1+z}} \notag\\
&\qquad\qquad\qquad+\frac{2a (x \cdot \nabla u )^3(\nabla u \cdot \nabla \phi)\phi}{|x|^{2(a+1)}(1+z)^{3/2}}
-\frac{2a (x \cdot \nabla u )^2(x \cdot \nabla \phi)\phi}{|x|^{2(1+a)}\sqrt{1+z}} \notag\\
&\qquad\qquad\qquad+\frac{2(2+z)(x \cdot \nabla u )(\nabla u \cdot \nabla \phi)\phi}{|x|^{2a}(1+z)^{3/2}}
- \frac{z(x\cdot\nabla \phi)}{|x|^{2a}\sqrt{1+z}},
\bigg)dx.
\end{align}
where $C$ is dimensional constant, and 
\begin{align*}
&L_1:=\frac{a^2 (x \cdot \nabla u )^4}{|x|^{2a+4}(1+z)^{3/2}},\\
&L_2:=\frac{a\left((2-a) z+4-a \right)(x \cdot \nabla u )^2}{|x|^{2a+2}(1+z)^{3/2}},\\
&L_3:=\frac{z\left((n-1-2a)z+(n-2-2a) \right)}{|x|^{2a}(1+z)^{3/2}}.
\end{align*}
\end{lemma}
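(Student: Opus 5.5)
The plan follows the Cabré–Figalli–Ros-Oton–Serra strategy: test the stability inequality (\ref{stable}) with $\xi = (x\cdot\nabla u)\,\eta$, where $\eta := |x|^{-a}\phi$ on $B_{2R}\setminus B_\rho$ and $\eta := \rho^{-a}\phi$ in $B_\rho$. The radial derivative $c := x\cdot\nabla u$ is the natural choice because it is the infinitesimal generator of dilations. Since $u\in W^{3,1}_{loc}$, the function $c$ lies in $W^{2,1}_{loc}$, so the test is admissible after approximation. Because $\eta$ is only Lipschitz across $\partial B_\rho$, I will apply stability in its integrated-by-parts form and then split the integrals into the two regions.

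\textbf{Step 1: linearized equation for $c$.} Write the operator as $\operatorname{div}(A(\nabla u))$ with $A(p)=p/\sqrt{1+|p|^2}$ and $DA(p)=\frac{I}{\sqrt{1+|p|^2}}-\frac{p\otimes p}{(1+|p|^2)^{3/2}}$; the quadratic form in (\ref{stable}) is $\int DA(\nabla u)\nabla\xi\cdot\nabla\xi$. Differentiating (\ref{equation}) in $x_k$, multiplying by $x_k$ and summing gives the commutator identity
\begin{align*}
\operatorname{div}\big(DA(\nabla u)\nabla c\big) + f'(u)c = \operatorname{div}\big(DA(\nabla u)\nabla u\big) - f(u)\cdot(\text{correction}),
\end{align*}
and using the equation once more the right-hand side collapses to $-2f(u) = 2\operatorname{div}A(\nabla u)$. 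I will work this out carefully, since the ``$2$'' here is where $n-2$ and $n-1$ enter $L_3$.

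\textbf{Step 2: stability with $\xi=c\eta$.} The standard identity for $\xi=c\eta$ is
\begin{align*}
\int f'(u)c^2\eta^2 \le \int DA\nabla(c\eta)\cdot\nabla(c\eta) = \int c^2\, DA\nabla\eta\cdot\nabla\eta + \int \eta^2 c\,\big(-\operatorname{div}(DA\nabla c)\big).
\end{align*}
Combining this with Step 1 cancels the $f'(u)$ terms. It leaves
\begin{align*}
0\le \int c^2\, DA\nabla\eta\cdot\nabla\eta + 2\int \eta^2 c\,\operatorname{div}A(\nabla u),
\end{align*}
where the last term must be integrated by parts again, producing $\nabla c\cdot A$ and $c\,A\cdot\nabla\eta^2$. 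Then I expand $\nabla\eta$:
\begin{itemize}
\item In $B_{2R}\setminus B_\rho$, $\nabla\eta = |x|^{-a}\nabla\phi - a|x|^{-a-2}x\phi$.
\item In $B_\rho$, $\eta$ is constant.
\end{itemize}
Since $DA\,x\cdot x = \frac{|x|^2}{\sqrt{1+z}}-\frac{c^2}{(1+z)^{3/2}}$, the $a^2$ piece gives the $L_1$-type terms, the cross terms give the $2a$ terms on the right-hand side of (\ref{key grad est}), and the $|\nabla\phi|^2$ pieces give the first two terms there.

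\textbf{Step 3: the term $2\int\eta^2 c\,\operatorname{div}A$.} Writing $\nabla c = \nabla u + D^2u\,x$, the term $A\cdot D^2u\,x = x\cdot\nabla\sqrt{1+z}$ is handled by one more integration by parts in $x$. This produces $\operatorname{div}(x\eta^2) = n\eta^2+x\cdot\nabla\eta^2$, which is the source of the $n$ in $L_3$, of the $-2a$ corrections, and of the last term $-z(x\cdot\nabla\phi)/(|x|^{2a}\sqrt{1+z})$. The $\sqrt{1+z}$ versus $z/\sqrt{1+z}$ bookkeeping gives the $(1+z)^{-3/2}$ denominators; I expect to use $\sqrt{1+z}-1 = z/(1+\sqrt{1+z})$, or simply the exact form $\frac{z}{\sqrt{1+z}} = \frac{z(1+z)}{(1+z)^{3/2}}$.

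\textbf{Step 4: the ball $B_\rho$ and collecting terms.} Inside $B_\rho$, $\phi\equiv1$ and $\eta$ is constant, so only the Step 3 terms survive. They give $\rho^{-2a}\int_{B_\rho}$ of a combination such as $(n-2)z+(n-1)z^2$ over $(1+z)^{3/2}$, which is bounded below by $C\rho^{-2a}\int_{B_\rho} z^2(1+z)^{-3/2}$. Boundary terms on $\partial B_\rho$ from the jump in $\nabla\eta$ must either cancel, since $\eta$ is continuous, or have a good sign; I expect them to be absorbed by writing the boundary contribution of the outer region carefully. Finally, I move everything to the form (\ref{key grad est}) and identify $L_1,L_2,L_3$.

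The main obstacle is the algebra in Steps 1 and 3: getting the exact coefficients $(2-a)z+4-a$ and $(n-1-2a)z+(n-2-2a)$ requires tracking every integration by parts and the $(1+z)$ powers, and checking the $\partial B_\rho$ terms. I would do this computation first in the smooth case and verify it against the radial one-dimensional ODE.
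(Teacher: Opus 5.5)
Your architecture is the paper's: test stability with $\xi=(x\cdot\nabla u)\eta$ and combine it with the equation differentiated along the dilation field. The paper does the latter weakly, by inserting $\varphi=\operatorname{div}\bigl(x(x\cdot\nabla u)\eta^2\bigr)$ into (\ref{weak equation}). But Step~1 contains a false identity, and Steps~2--3 rest on it. Your first formula is right: with $c=x\cdot\nabla u$,
\[
\operatorname{div}\bigl(DA(\nabla u)\nabla c\bigr)+f'(u)c=\operatorname{div}\bigl(DA(\nabla u)\nabla u\bigr)+\operatorname{div}A(\nabla u).
\]
The step ``using the equation once more'' then replaces $\operatorname{div}(DA(\nabla u)\nabla u)$ by $-f(u)$. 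That is legitimate only when $DA(p)p=A(p)$, i.e.\ for a $1$-homogeneous field such as $A(p)=p$; this is where the $2\Delta u$ of the semilinear case comes from. For the mean curvature operator $DA(p)p=p/(1+|p|^2)^{3/2}\neq A(p)$, so Step~2 must read
\[
0\le\int c^2\,DA\nabla\eta\cdot\nabla\eta-\int c\eta^2\operatorname{div}\Bigl(\frac{\nabla u}{\sqrt{1+z}}\Bigr)-\int c\eta^2\operatorname{div}\Bigl(\frac{\nabla u}{(1+z)^{3/2}}\Bigr).
\]
Your display also has the wrong sign in front of the $\operatorname{div}A$ term. The error is not cosmetic. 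With a doubled $\operatorname{div}A$ term, Step~3 produces the $\eta^2$-coefficient $2n\sqrt{1+z}-2z/\sqrt{1+z}$, which does not even vanish at $z=0$. It also produces the cross term $4c\eta(\nabla u\cdot\nabla\eta)/\sqrt{1+z}$, which after substituting $\eta$ yields $a(4-a)(1+z)$ instead of $a\bigl((2-a)z+4-a\bigr)$ in $L_2$. No manipulation such as $\sqrt{1+z}-1=z/(1+\sqrt{1+z})$ can turn this into the stated $L_2,L_3$.

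The repair is to keep the extra term and integrate it by parts on its own:
\[
-\int c\eta^2\operatorname{div}\Bigl(\frac{\nabla u}{(1+z)^{3/2}}\Bigr)=\int\frac{\bigl(z+\tfrac12x\cdot\nabla z\bigr)\eta^2+c\,\nabla u\cdot\nabla\eta^2}{(1+z)^{3/2}}.
\]
Since $\frac{x\cdot\nabla z}{2(1+z)^{3/2}}=-x\cdot\nabla(1+z)^{-1/2}$, the divergence theorem turns the middle piece into $\int(n\eta^2+x\cdot\nabla\eta^2)/\sqrt{1+z}$. Your Step~3 treatment of a single $\operatorname{div}A$ term gives $\int\eta^2\bigl(z/\sqrt{1+z}-n\sqrt{1+z}\bigr)-\int\sqrt{1+z}\,x\cdot\nabla\eta^2+\int c\,\nabla u\cdot\nabla\eta^2/\sqrt{1+z}$. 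Now add the two contributions. Using $-n\sqrt{1+z}+n/\sqrt{1+z}=-nz/\sqrt{1+z}$, the $\eta^2$-terms collapse exactly to $-z\bigl((n-1)z+(n-2)\bigr)/(1+z)^{3/2}$. The $\nabla\eta$-terms become $2(2+z)c\eta(\nabla u\cdot\nabla\eta)/(1+z)^{3/2}-2z\eta(x\cdot\nabla\eta)/\sqrt{1+z}$. This is the paper's (\ref{f5c25d11d}), and substituting $\eta$ gives $L_1,L_2,L_3$.

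Two smaller remarks. First, no interface terms on $\partial B_\rho$ arise: after these integrations by parts only $\eta$ and $\nabla\eta$ appear, so the inequality passes to the Lipschitz $\eta$ by approximation. Second, when you collect terms the last one comes out as $-2z\phi(x\cdot\nabla\phi)/(|x|^{2a}\sqrt{1+z})$. That is the form the paper actually uses in its radial computations; the displayed statement drops the factor $2\phi$.
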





\begin{remark}
The assumption $u\in W^{3,1}_{\mathrm{loc}}(B_{2R})$ is purely technical: it is imposed only to guarantee that the test function $\varphi$ belongs to $W^{1,1}_0(B_{2R})$. All higher-order derivatives of $u$ are eliminated by integration by parts, so the final estimates  depend only on the first derivatives of $u$ and on the structure of the equation. One may expect the assumption $W^{3,1}$ to be relaxed by density or approximation arguments.
\end{remark}

\begin{proof}[proof of Lemma~\ref{lemma key grad est}]

The weak formulation $(\ref{weak equation})$ admits test functions in $W^{1,1}_0(2R)$ by a standard density argument.
Indeed, since $u\in W^{3,1}_{loc}(B_{2R})$ and $\eta\in C_c^\infty(B_{2R})$, the function
\begin{align*}
\varphi=\operatorname{div} \bigl(x(x\cdot\nabla u)\eta^2\bigr)
\end{align*}
belongs to $W^{1,1}_0(B_{2R})$.
Moreover, the vector field $\nabla u/\sqrt{1+|\nabla u|^2}$ is locally bounded,
then by a standard density argument (since $C_c^\infty$ is dense in $W^{1,1}_0$), the weak formulation (\ref{weak equation}) holds for such $\varphi$.

Recall that $f \in C^1$. An integration by parts yields
\begin{align*}
\int_{B_{2R}} f(u) \varphi dx
= - \int_{B_{2R}} f'(u) (x\cdot \nabla u )^2 \eta^2 dx
\end{align*}
In the following, indices always range from $1$ to $n$,
and we adopt the Einstein summation convention: repeated indices are implicitly summed over. In particular,
$$
x_i u_i = x \cdot \nabla u,
\qquad
u_j u_{ij} = \frac{z_i}{2}
\qquad
x_jx_k u_{ij}u_{ik}= |D^2 u\, x|^2.
$$
Using integration by parts again, we obtain
\begin{align}\label{xtest lhs}
&\int_{B_{2R}} \frac{\nabla u \cdot \nabla \varphi}{\sqrt{1+z}} dx \notag\\
=& -\int_{B_{2R}} \sum_{i,j=1}^n \partial_i \left( \frac{u_j}{\sqrt{1+z}} \right) \partial_j \left( x_i(x\cdot \nabla u ) \eta^2 \right) dx \notag\\
= & -\int_{B_{2R}} \left( \frac{u_{ij}}{\sqrt{1+z}} - \frac{1}{2}\frac{u_j z_i}{(1+z)^{3/2}} \right) \notag\\
&\qquad\qquad\qquad \times \left( \delta_{ij}(x\cdot\nabla u ) \eta^2
+ x_iu_j \eta^2
+ x_ix_k u_{kj} \eta^2
+ 2x_i(x\cdot\nabla u ) \eta\eta_j
\right) dx\notag\\
=&-\int_{B_{2R}} \bigg( \frac{\Delta u (x\cdot\nabla u )\eta^2}{\sqrt{1+z}}
+ \frac{(x\cdot\nabla z)\eta^2}{2\sqrt{1+z}}
+ \frac{|D^2 u\, x|^2 \eta^2}{\sqrt{1+z}}
+ \frac{2((D^2 u\, x)\cdot \nabla \eta)(x\cdot\nabla u )\eta}{\sqrt{1+z}} \notag\\
&\qquad\qquad-\frac{(x\cdot\nabla u )(\nabla u \cdot \nabla z)\eta^2}{2(1+z)^{3/2}}
-\frac{(x \cdot \nabla z)z\eta^2}{2(1+z)^{3/2}}
-\frac{(x \cdot \nabla z)^2\eta^2}{4(1+z)^{3/2}} \notag\\
&\qquad\qquad-\frac{(x \cdot \nabla z)(\nabla u \cdot \nabla \eta)(x \cdot \nabla u )\eta}{(1+z)^{3/2}}
\bigg)dx.
\end{align}
Then using weak formulation (\ref{weak equation}), we obtain
\begin{align}\label{weak equation est test}
\int_{B_{2R}} f'(u) (x\cdot \nabla u )^2 \varphi dx = -\text{RHS of}\ (\ref{xtest lhs})
\end{align}

Define
\begin{align*}
\xi = (x \cdot \nabla u) \eta,
\end{align*}
which is a valid test function in (\ref{stable}) by a density argument.
We have
\begin{align*}
\nabla \xi = \eta \nabla u+\eta (D^2 u\, x)+(x \cdot \nabla u) \nabla \eta.
\end{align*}
It follows that
\begin{align}\label{8a89d914a}
|\nabla \xi|^2 =& z \eta^2+|D^2 u\, x|^2\eta^2+(x\cdot\nabla u )^2 |\nabla \eta|^2 \notag\\
&+(x\cdot \nabla z) \eta^2
+ 2(x\cdot\nabla u )(\nabla u \cdot \nabla \eta)\eta
+ 2((D^2 u\, x) \cdot \nabla \eta)(x\cdot\nabla u )\eta.
\end{align}
and
\begin{align}\label{c5445e7c3}
(\nabla u \cdot \nabla \xi)^2 = &
z^2 \eta^2
+ \frac{1}{4}(x\cdot\nabla z)^2\eta^2
+ (x\cdot\nabla u )^2(\nabla u \cdot \nabla \eta)^2
+ (x\cdot\nabla z) z \eta^2 \notag\\
&+2(x\cdot\nabla u )(\nabla u \cdot\nabla \eta) z \eta
+ (x\cdot\nabla z)(x\cdot\nabla u )(\nabla u \cdot \nabla \eta)\eta
\end{align}

Substituting (\ref{8a89d914a}) and (\ref{c5445e7c3}) into (\ref{stable}), we obtain
\begin{align}\label{stable est test}
&\int_{B_{2R}} f'(u)(x\cdot\nabla u )^2 \eta^2 dx \notag\\
&\leq \int_{B_{2R}} \bigg(
\frac{|D^2 u\, x|^2 \eta^2}{\sqrt{1+z}}
+\frac{2 (x\cdot\nabla u ) (\nabla u \cdot \nabla \eta)\eta}{(1+z)^{3/2}}
-\frac{(x\cdot\nabla u ) (\nabla u \cdot \nabla \eta) (x\cdot \nabla z)\eta}{(1+z)^{3/2}} \notag\\
&\qquad\qquad
+\frac{2 (x\cdot\nabla u ) ((D^2 u\, x) \cdot \nabla\eta)\eta}{\sqrt{1+z}}
-\frac{(x\cdot\nabla u )^2 (\nabla u \cdot \nabla \eta)^2}{(1+z)^{3/2}} \notag\\
&\qquad\qquad+\frac{(x\cdot\nabla u )^2 |\nabla \eta|^2}{\sqrt{1+z}}
+\frac{(x\cdot \nabla z)\eta^2}{(1+z)^{3/2}}
-\frac{(x\cdot \nabla z)^2\eta^2}{4 (1+z)^{3/2}}
+\frac{z\eta^2}{(1+z)^{3/2}} \bigg) dx.
\end{align}
Combining (\ref{weak equation est test}) and (\ref{stable est test}), we have
\begin{align}\label{def1a7069}
&\int_{B_{2R}} \bigg(
\frac{\Delta u (x\cdot \nabla u )\eta^2}{\sqrt{1+z}}
-\frac{(x\cdot \nabla u ) (\nabla u \cdot \nabla z)\eta^2}{2 (1+z)^{3/2}}
-\frac{2 (x\cdot \nabla u ) (\nabla u \cdot \nabla \eta)\eta}{(1+z)^{3/2}} \notag\\
&\qquad+\frac{(x\cdot \nabla u )^2 (\nabla u \cdot \nabla \eta)^2}{(1+z)^{3/2}}
-\frac{(x\cdot \nabla u )^2|\nabla \eta|^2}{\sqrt{1+z}}
-\frac{(x \cdot \nabla z)\eta^2}{2 (1+z)^{3/2}}
-\frac{z\eta^2}{(1+z)^{3/2}}
\bigg) dx
\leq 0.
\end{align}

Next, we eliminate all second-order derivatives of $u$ in (\ref{def1a7069}).
Expanding the divergence operator directly gives
\begin{align*}
\operatorname{div}\left( \frac{\nabla u}{\sqrt{1+z}} (x \cdot \nabla u ) \right)
=\operatorname{div}\left( \frac{\nabla u}{\sqrt{1+z}} \right)(x \cdot \nabla u )
+ \frac{z}{\sqrt{1+z}}
+ \frac{x\cdot\nabla z}{2\sqrt{1+z}}
\end{align*}
and
\begin{align*}
\operatorname{div}\left( x \sqrt{1+z} \right) = n\sqrt{1+z}+\frac{x \cdot \nabla z}{2\sqrt{1+z}}
\end{align*}
Cancelling the identical last term on the right-hand sides of the two equalities above, we obtain
\begin{align*}
\operatorname{div}\left( \frac{\nabla u}{\sqrt{1+z}} \right)(x \cdot \nabla u )
= \operatorname{div}\left( \frac{\nabla u}{\sqrt{1+z}} (x \cdot \nabla u ) - x \sqrt{1+z} \right)
- \frac{z}{\sqrt{1+z}}+n\sqrt{1+z}
\end{align*}
Using the above equality,
we can handle the first two terms in the integral of (\ref{def1a7069}) as follows:
\begin{align}\label{e41ffd031}
& \int_{B_{2R}} \left( \frac{\Delta u (x\cdot\nabla u )\eta^2}{\sqrt{1+z}}
-\frac{(x\cdot\nabla u )(\nabla u \cdot\nabla z)\eta^2}{2(1+z)^{3/2}} \right) dx
\notag\\
&= \int_{B_{2R}} \operatorname{div}\left( \frac{\nabla u}{\sqrt{1+z}} \right) (x \cdot \nabla u )\eta^2 dx \notag\\
&= \int_{B_{2R}} 2 \eta\left( -\frac{\nabla u}{\sqrt{1+z}} (x \cdot \nabla u )
+ x\sqrt{1+z} \right) \cdot \nabla \eta dx \notag\\
& \qquad\qquad+\int_{B_{2R}} \left( - \frac{z}{\sqrt{1+z}}+n\sqrt{1+z} \right) \eta^2 dx
\end{align}
Furthermore, applying the divergence theorem, we have
\begin{align}\label{e4fbf921c}
\int_{B_{2R}} \frac{(x \cdot \nabla z)\eta^2}{2(1+z)^{3/2}}
&= - \int_{B_{2R}} x \cdot \nabla \left(\frac{1}{\sqrt{1+z}} \right) \eta^2 dx \notag\\
&=\int_{B_{2R}} \frac{\operatorname{div}\left( x \eta^2 \right)}{\sqrt{1+z}} dx \notag\\
&=\int_{B_{2R}} \left( \frac{n \eta^2}{\sqrt{1+z}}+\frac{2\eta(x \cdot \nabla \eta)}{\sqrt{1+z}} \right)dx
\end{align}

Substituting (\ref{e41ffd031}) and (\ref{e4fbf921c}) into (\ref{def1a7069}) and rearranging, we move the terms containing $\eta^2$ to the left-hand side of the inequality and the terms containing $\nabla\eta$ to the right-hand side.
Noting
This yields
\begin{align}\label{f5c25d11d}
\int_{B_{2R}} \frac{z \left((n-1) z+(n-2) \right)}{(1+z)^{3/2}} \eta^2
&\leq \int_{B_{2R}} \bigg( \frac{(x \cdot \nabla u )^2 |\nabla \eta|^2}{\sqrt{1+z}}
+ \frac{2(2+z)\eta}{(1+z)^{3/2}} (x \cdot \nabla u )(\nabla u \cdot \nabla \eta) \notag\\
&\qquad\qquad-\frac{2 z \eta}{\sqrt{1+z}} (x \cdot \nabla \eta)
- \frac{(x \cdot \nabla u )^2(\nabla u \cdot \nabla \eta)^2}{(1+z)^{3/2}} \bigg)dx.
\end{align}

Given $0 < \rho < R$, we choose the test function $\eta$ as follows:
\begin{align*}
\eta =
\left\{
\begin{array}{ll}
\rho^{-a} \quad & \text{if}\ |x|<\rho, \\
|x|^{-a} \phi \quad &\text{if}\ \rho\leq |x|<2R.
\end{array}
\right.
\end{align*}
Since \(\phi\equiv 1\) in \(B_\rho(0)\), the function \(\eta\) belongs to \(C_c^\infty(B_{2R})\). 
Noting $\nabla \eta = 0$ in $B_\rho(0)$, substituting $\eta$ into (\ref{f5c25d11d}) we get
\begin{align}\label{cf7ead00e}
&\frac{1}{\rho^{2a}} \int_{B_\rho} \frac{z \left((n-1) z+(n-2) \right)}{(1+z)^{3/2}} dx \notag\\
&\leq \int_{B_{2R} \backslash B_\rho} \bigg( \frac{(x \cdot \nabla u )^2 |\nabla \eta|^2}{\sqrt{1+z}}
+ \frac{2(2+z)\eta}{(1+z)^{3/2}} (x \cdot \nabla u )(\nabla u \cdot \nabla \eta) \notag\\
&\qquad\qquad\qquad\quad-\frac{2z\eta}{\sqrt{1+z}} (x \cdot \nabla \eta)
- \frac{(x \cdot \nabla u )^2(\nabla u \cdot \nabla \eta)^2}{(1+z)^{3/2}} \bigg)dx.
\end{align}

Noting that in $B_{2R}\backslash B_\rho$ there holds
\begin{align*}
\nabla \eta &= -a|x|^{-a-2}\phi x+|x|^{-a} \nabla \phi.
\end{align*}
It follows that
\begin{align*}
|\nabla \eta|^2 &= a^2 |x|^{-2a-2}\phi^2+|x|^{-2a} |\nabla \phi|^2 -2a |x|^{-2a-2} \phi (x\cdot\nabla\phi),
\end{align*}
\begin{align*}
\nabla u \cdot \nabla \eta &= -a|x|^{-a-2}\phi (x\cdot \nabla u)+|x|^{-a} (\nabla u \cdot \nabla \phi),
\end{align*}
and
\begin{align*}
x \cdot \nabla \eta &= -a|x|^{-a}\phi+|x|^{-a} (x \cdot \nabla \phi).
\end{align*}
Substituting these expressions into (\ref{cf7ead00e}),
merging and rearranging the terms yields
\begin{align}\label{d1292a215}
&\frac{1}{\rho^{2a}} \int_{B_\rho} \frac{z \left((n-1) z+(n-2) \right)}{(1+z)^{3/2}} dx
+ \int_{B_{2R}\backslash B_\rho} (L_1+L_2+L_3) \phi^2 dx \notag\\
&\leq
\int_{B_{2R}\backslash B_\rho}
\bigg(
-\frac{(x \cdot \nabla u )^2(\nabla u \cdot \nabla \phi)^2}{|x|^{2a}(1+z)^{3/2}}
+ \frac{(x \cdot \nabla u )^2 |\nabla \phi|^2}{|x|^{2a}\sqrt{1+z}} \notag\\
&\qquad\qquad\qquad+\frac{2a (x \cdot \nabla u )^3(\nabla u \cdot \nabla \phi)\phi}{|x|^{2(a+1)}(1+z)^{3/2}}
-\frac{2a (x \cdot \nabla u )^2(x \cdot \nabla \phi)\phi}{|x|^{2(1+a)}\sqrt{1+z}} \notag\\
&\qquad\qquad\qquad+\frac{2(2+z)(x \cdot \nabla u )(\nabla u \cdot \nabla \phi)\phi}{|x|^{2a}(1+z)^{3/2}}
- \frac{z(x\cdot\nabla \phi)}{|x|^{2a}\sqrt{1+z}}
\bigg)dx.
\end{align}
For the left-hand side of (\ref{d1292a215}), we have
\begin{align}\label{5cd8f2135}
\frac{1}{\rho^{2a}} \int_{B_\rho} \frac{z \left((n-1) z+(n-2) \right)}{(1+z)^{3/2}} dx
\geq \frac{C}{\rho^{2a}} \int_{B_\rho} \frac{z^2}{(1+z)^{3/2}} dx.
\end{align}
Then combining (\ref{d1292a215}) and (\ref{5cd8f2135}), we complete the proof of Lemma~\ref{lemma key grad est}.

\end{proof}

\section{Inner Regularity}

We give the proof of Theorem~\ref{theorem regularity}

\begin{proof}[Proof of Theorem~\ref{theorem regularity}]
Let $a$ be a dimensional constant to be specified later,
and $\phi\in C_c^\infty(B_2)$ be a cutoff function satisfying
\begin{align}\label{test function phi}
\left\{
\begin{array}{ll}
\phi(x) = 1 \quad & \text{if}\ |x|\leq 1,\\
0\leq \phi(x) \leq 1,\ |\nabla \phi(x) | \leq C \quad & \text{if}\ 1<|x|<2,\\
\phi(x) = 0 \quad & \text{if}\ |x|\geq 2.
\end{array}
\right.
\end{align}
We fix $R=1$ and $0<\rho<1$, and apply Lemma~\ref{lemma key grad est}.
Since $\nabla \phi$ is supported in $B_2 \backslash B_1$ and $(x \cdot \nabla u ) \leq |x| z^{1/2} $,
a straightforward estimate of the right-hand side of (\ref{key grad est}) gives
\begin{align*}
\text{RHS of}\ (\ref{key grad est})
&\leq C \int_{B_2\backslash B_1} \left( \frac{1}{|x|^{2a-2}}+\frac{1}{|x|^{2a-1}} \right) \left( \frac{z}{\sqrt{1+z}} + \frac{z^2}{(1+z)^{3/2}}  \right) dx \notag\\
&\leq C \int_{B_2\backslash B_1} \left( \frac{z}{\sqrt{1+z}} + \frac{z^2}{(1+z)^{3/2}}  \right) dx \notag\\
&\leq C \| \nabla u \|_{L^1(B_2)}
\end{align*}
Therefore Lemma~\ref{lemma key grad est} gives
\begin{align}\label{est with a}
\frac{1}{\rho^{2a}} \int_{B_\rho} \frac{z^2}{(1+z)^{3/2}} dx
+ \int_{B_2\backslash B_\rho} \left( L_1+L_2+L_3 \right) \phi^2 dx
\leq C \| \nabla u \|_{L^1(B_2)}
\end{align}
where $L_1$, $L_2$, and $L_3$ are defined in Lemma~\ref{lemma key grad est}.

We now distinguish two ranges of dimension.

{\bf Case $ 2\leq n \leq 5$:}
Choose $a=(n-1)/2$.
It follows that
\begin{align*}
L_1\geq 0,\quad L_2\geq 0,\quad L_3 = - \frac{z}{|x|^{2a} (1+z)^{3/2}}
\end{align*}
It follows that 
\begin{align*}
L_1+L_2+L_3 \geq -\frac{z}{|x|^{2a}(1+z)^{3/2}}.
\end{align*}

{\bf Case $n \geq 6$:}
Choose $a=2\sqrt{n-1}-2$.
From the explicit formulas for $L_2$ and $L_3$ we extract the leading terms in $|\nabla u|$:
\begin{align*}
L_2\geq a(2-a)\frac{z (x \cdot \nabla u )^2}{|x|^{2a+2}(1+z)^{3/2}} - C\frac{z}{|x|^{2a}(1+z)^{3/2}}
\end{align*}
\begin{align*}
L_3\geq (n-1-2a)\frac{z^2}{|x|^{2a}(1+z)^{3/2}} - C\frac{z}{|x|^{2a}(1+z)^{3/2}}
\end{align*}
Define the quantities
\begin{align}\label{def s t}
s := \frac{(x \cdot \nabla u )^2}{|x|^{a+2}(1+z)^{3/4}}, \quad
t := \frac{1}{|x|^a(1+z)^{3/4}}.
\end{align}
Then we can write
\begin{align*}
L_1+L_2+L_3 \geq \alpha s^2+\beta st+\gamma t^2 - C\frac{z}{|x|^{2a}(1+z)^{3/2}}
\end{align*}
where
\begin{align}\label{def alpha beta gamma}
\alpha = a^2,
\quad \beta = a(2-a)z,
\quad \gamma = (n-1-2a) z^2.
\end{align}
The discriminant of the quadratic form $s^2+\beta st+\gamma t^2 $ equals
\begin{align*}
\beta^2-4\alpha\gamma = a^2 z^2 (a (a+4)-4 n+8).
\end{align*}
With our choice $a=2\sqrt{n-1}-2$ one checks that $a(a+4)-4n+8=0$.
Hence the quadratic form is nonnegative:
\begin{align*}
\alpha s^2+\beta st+\gamma t^2 \geq 0.
\end{align*}
Consequently,
\begin{align*}
L_1+L_2+L_3 \geq - C\frac{z}{|x|^{2a}(1+z)^{3/2}}.
\end{align*}

In both cases, we obtain the same type lower bound for $L_1+L_2+L_3$, 
with the exponent $a$ depending on the dimension $n$.
Specifically, we define
\begin{align*}
a =
\left\{
\begin{array}{ll}
(n-1)/2 & \text{if}\ 2 \leq n \leq 5,\\
2\sqrt{n-1}-2 & \text{if}\ n \geq 6.
\end{array}
\right.
\end{align*}
With this choice of $a$, we get
\begin{align*}
L_1+L_2+L_3 \geq - C\frac{z}{|x|^{2a}(1+z)^{3/2}}.
\end{align*}
for some dimensional constant $C$. 
Since $z/(1+z)^{3/2}$ is bounded and $|x|^{-2a}$ is integrable on $B_2$ for $2a<n$, we have
\begin{align*}
\int_{B_2\backslash B_\rho} (L_1+L_2+L_3) \phi^2 dx \geq -C \int_{B_2\backslash B_\rho} \frac{1}{|x|^{2a}} dx \geq -C
\qquad \forall \ 0<\rho<1.
\end{align*}
Substituting this into (\ref{est with a}) we obtain 
\begin{align*}
\int_{B_\rho(0)} \frac{z^2}{(1+z)^{3/2}} dx
\leq C\rho^{2a} \left( 1+\| \nabla u \|_{L^1(B_2)} \right).
\end{align*}
Splitting the integral of $|\nabla u|$ according to whether $|\nabla u|\leq 1$ or $|\nabla u|>1$, we get
\begin{align}\label{6222894d2}
\int_{B_\rho(0)} |\nabla u| dx
&\leq \int_{B_\rho(0) \cap \{|\nabla u| \leq 1 \}} 1 dx
+ C \int_{B_\rho(0) \cap \{|\nabla u| >1 \}} \frac{z^2}{(1+z)^{3/2}} dx \notag\\
&\leq C(\rho^n+\rho^{2a})\left( 1+\| \nabla u \|_{L^1(B_2)} \right)
\end{align}
Since $2a<n$ for our choice of $a$ and $0<\rho<1$, the term $\rho^n$ is dominated by $\rho^{2a}$. Therefore
\begin{align*}
\int_{B_\rho(0)} |\nabla u| dx
\leq C\rho^{2a}\left( 1+\| \nabla u \|_{L^1(B_2)} \right) \qquad \forall \ 0<\rho<1.
\end{align*}

By translation invariance of the equation and the stability condition,
the same estimate holds for any ball $B_\rho(y)\subset B_{1/2}$ with
$0<\rho<1/2$. Hence, by the definition of Morrey spaces,
\[
\|\nabla u\|_{M^{p_n}(B_{1/2})}
\leq C \left( 1+\|\nabla u\|_{L^1(B_2)} \right),
\qquad p_n=\frac{n}{n-2a}.
\]


Finally, using scaling invariance together with a finite covering argument,
the above bound extends to $B_1$, yielding
\begin{align}\label{gradL1 ctrl GradMn}
\|\nabla u\|_{M^{p_n}(B_1)}
\leq C \left( 1+\|\nabla u\|_{L^1(B_2)} \right).
\end{align}
This completes the proof.
\end{proof}

Next we give the proof of Proposition~\ref{proposition 1}, which provides a sharper decay estimate in the radially symmetric case.

\begin{proof}[Proof of Proposition~\ref{proposition 1}]
Let $a = (n-1+\alpha)/2$, $R=1$, and $\phi$ be a cutoff function satisfying (\ref{test function phi}).
Since $u$ is radially symmetric, we have
\begin{align}\label{29f43fecf}
(x\cdot \nabla u)^2 = |x|^2 z
\end{align}
and a direct calculation gives
\begin{align}\label{radial L1+L2+L3}
L_1+L_2+L_3 &= \frac{(n-2)z^{2}+(-a^2+2a+n-2)z}{|x|^{2a}(1+z)^{3/2}},
\end{align}
where $L_1$, $L_2$, and $L_3$ are defined in Lemma~\ref{lemma key grad est}.
Since $n\geq 2$, the coefficient of $z^2$ is nonnegative, and hence the quadratic polynomial
$(n-2)z^{2}+(-a^2+2a+n-2)z$ is bounded from below for $z\geq 0$.
Also noting $|x|^{-2a}$ is integrable.
Hence there exists a constant $K=K(n,\alpha)$ such that
\begin{align*}
\int_{B_2\backslash B_\rho} (L_1+L_2+L_3) \phi^2 dx
\geq -K\int_{B_2\backslash B_\rho} \frac{1}{|x|^{2a}} dx
\geq -K
\end{align*}
for all $\rho\in(0,1)$.
Additionally, we set $\phi$ also be radial and nonincreasing in the radial variable.
In particular, we get
\begin{align}\label{c582d1d70}
x\cdot \nabla \phi = -|x| |\nabla \phi|,\qquad
\nabla u \cdot \nabla \phi = - (x \cdot \nabla u) \frac{|\nabla \phi|}{|x|}
\qquad \forall \ |x|\in(1,2).
\end{align}
Applying (\ref{c582d1d70}) and (\ref{29f43fecf}) again,
noting $2a<n$,
we obtain, after possibly enlarging the constant $K=K(n,\alpha)$
\begin{align*}
\text{RHS of}\ (\ref{key grad est})
&= \int_{B_2\backslash {B_1}}
\left(\frac{z |\nabla \phi|^2}{|x|^{2a-2}(1+z)^{3/2}}
+\frac{2(a-1) z \phi |\nabla \phi|}{|x|^{2a-1} (1+z)^{3/2}} \right)dx \\
&\leq K \int_{B_2\backslash {B_1}}
\left(\frac{|\nabla \phi|^2}{|x|^{2a-2}}
+\frac{\phi |\nabla \phi|}{|x|^{2a-1}} \right)dx \\
&\leq K.
\end{align*}
Hence the gradient estimate (\ref{key grad est}) becomes
\begin{align*}
\int_{B_\rho} \frac{z^2}{(1+z)^{3/2}} \phi^2 dx
\leq K \rho^{n-1+\alpha}\qquad \forall \ \rho\in(0,1).
\end{align*}
Noting $n-1+\alpha<n$, then using a similar splitting argument as in (\ref{6222894d2}) one may obtain
\begin{align*}
\int_{B_\rho} |\nabla u| dx
\leq K \rho^{n-1+\alpha}\qquad \forall \ \rho\in(0,1).
\end{align*}
which completes the proof.
\end{proof}

\begin{proof}[Proof of Corollary~\ref{corollary 1}]
Because $u$ is radial, we write $u(x) = u(r)$ with $r = |x|$, so that $|\nabla u(x)| = |u'(r)|$.
Using spherical coordinates, for any $0 < \rho < 1$ we have
\begin{align*}
\int_{B_\rho(0)} |\nabla u (x)| dx
= \omega_{n-1}\int_{0}^{\rho} |u'(s)| s^{n-1} ds
\end{align*}
where $\omega_{n-1}$ denotes the surface area of $S^{n-1}$.
Proposition~\ref{proposition 1} gives $\int_{B_\rho(0)} |\nabla u|dx \leq K\rho^{n-1+\alpha}$ with $K=K(n,\alpha)$, which yields
\begin{align*}
\int_{0}^{\rho} |u'(s)| s^{n-1} ds \leq \frac{K}{\omega_{n-1}} \rho^{n-1+\alpha},
\qquad 0<\rho<1.
\end{align*}

Now take $0<\rho_1<\rho_2<1$. Then
\begin{align*}
|u(\rho_2)-u(\rho_1)|
\leq \int_{\rho_1}^{\rho_2} |u'(s)|ds
\leq \rho_1^{1-n}\int_{\rho_1}^{\rho_2} |u'(s)| s^{n-1}ds
\leq \frac{K}{\omega_{n-1}}\rho_1^{1-n}\rho_2^{n-1+\alpha}
.
\end{align*}
Thus there exists constant $C(n,\alpha)$ such that 
\begin{align}\label{cauchy sequence est}
|u(\rho_2)-u(\rho_1)|
\leq C(n,\alpha)\Bigl(\frac{\rho_2}{\rho_1}\Bigr)^{\!n-1}\rho_2^{\alpha},
\qquad 0<\rho_1<\rho_2<1.
\end{align}

Take $r_k = 2^{-k}$ ($k \in \mathbb{N}$).
Putting $\rho_1=r_{k+1}$ and $\rho_2=r_k$ into (\ref{cauchy sequence est}), we obtain
\begin{align*}
|u(r_{k+1})-u(r_k)| \leq C(n,\alpha) 2^{n-1} (2^{-k})^\alpha = C(n,\alpha) 2^{-k\alpha}.
\end{align*}
Since $\alpha>0$, the series $\sum_{k \in \mathbb{N}} 2^{-k\alpha}$ converges,
Consequently, $\{u(r_k) \}_{k\rightarrow \infty}$ is a Cauchy sequence,
and the limit $M := \lim_{k\rightarrow \infty} u(r_k)$ exists and is finite.

It remains to show that $u(t)\to M$ as $t\rightarrow 0^+$.
Let $t \in (0,1/2)$ and choose $k$ such that $r_{k+1} < t \leq r_k$.
Setting $\rho_1=t$, $\rho_2=r_{k}$ in (\ref{cauchy sequence est}),
noting $\rho_2/\rho_1\leq 2$ and $r_k\leq 2t$, we obtain
\begin{align*}
|u(t)-u(r_{k})| \leq C(n,\alpha) 2^{n-1} (2t)^\alpha = C(n,\alpha) t^{\alpha}.
\end{align*}
Therefore
\begin{align*}
|u(t)-M|
\leq |u(t)-u(r_k)|+|u(r_k)-M|
\leq C(n,\alpha)t^{\alpha}+|u(r_k)-M|.
\end{align*}
As $t\to0^+$ we have $k\to\infty$, so $|u(r_k)-M|\to0$. 
Hence $|u(t)-M|\to0$.
Thus
\begin{align*}
\lim_{t\rightarrow 0^+} u(t) = M,
\end{align*}
which means that the origin is at most a removable singularity of $u$.
\end{proof}

The following proof shows that the critical exponent $p_n$ obtained in Theorem~\ref{theorem regularity}
cannot be improved: 
for every $\alpha>0$ one can produce a stable solution whose gradient fails to belong to the slightly larger space $M^{n+\alpha}$.
The example is of the form $u(x)=|x_1|^{1-a}$ with $a$ close to $1$.
Such a function has a gradient that blows up on the hyperplane $\{x_1=0\}$ exactly at the rate $|\nabla u(x)| \sim |x_1|^{-a}$, 
which is just outside the Morrey space $M^{n+\alpha}$ when $a$ is chosen appropriately.

\begin{proof}[Proof of Theorem~\ref{theorem bad regularity solution}]
Let $a\in (1/2,1)$ and define $u(x) = |x_1|^{1-a}$.
Then we have
\begin{align*}
\nabla u = \left( \frac{(1-a) \mathrm{sgn}(x_1)} {|x_1|^a}, 0, 0, \cdots \right),
\qquad |\nabla u| = \frac{1-a}{|x_1|^a}.
\end{align*}
A direct calculation shows that $\nabla u\in M^{n/a}(B_1)$,
but $\nabla u \notin M^{n/a+b}(B_1)$ for any $b>0$.
Choosing
\begin{align*}
a=\frac{n}{n+\alpha/2},\qquad b=\frac{\alpha}{2}
\end{align*}
we then obtain $\nabla u \notin M^{n+\alpha}(B_1)$.
Hence, it suffices to show that such $u$ is a stable solution of (\ref{equation}) for a suitable nonlinearity $f\in C^1(\mathbb{R})$.

{\bf Step 1 (The associated nonlinearity).}
A direct computation shows
\begin{align*}
\operatorname{div}\left( \frac{\nabla u}{\sqrt{1+|\nabla u|^2}} \right)
= -\frac{a(1-a)|x_1|^{-a-1}}{\left(1+(1-a)^2 |x_1|^{-2 a}\right)^{3/2}}
\end{align*}
Since $u(x)=|x_1|^{1-a}$, we have $|x_1| = u^{1/(1-a)}$. Therefore $u$ satisfies
\begin{align*}
\operatorname{div}\left( \frac{\nabla u}{\sqrt{1+|\nabla u|^2}} \right) = -f(u),
\end{align*}
where
$f=f_a\in C^1(\mathbb{R})$ defined by
\begin{align*}
f(t) =
\left\{
\begin{array}{ll}
\displaystyle
\frac{a(1-a)t^{\frac{a+1}{a-1}}}{\left(1+(1-a)^2 t^{\frac{2 a}{a-1}}\right)^{3/2}}
\qquad &\text{if}\ t>0, \\
0\qquad &\text{if}\ t\leq 0.
\end{array}
\right.
\end{align*}
Moreover, one may check that
\begin{align*}
u \in W^{1,1}(B_{1}),
\qquad \frac{\nabla u}{\sqrt{1+|\nabla u|^2}} \in L_{loc}^\infty (B_{1}),
\qquad f(u) \in L_{loc}^1 (B_{1}).
\end{align*}
Consequently $u$ is a weak solution of equation (\ref{equation}) with this $f$ in $B_1$.

{\bf Step 2 (Reduction of the stability condition).}
Differentiating $f$ yields
\begin{align*}
f'(u) = \frac{a \left( (1-a)^2 (2a-1)
-(1+a)u^{\frac{2 a}{1-a}}
\right)u^{\frac{2 (a+1)}{a-1}}}
{\left(1+(1-a)^2 u^{\frac{2 a}{a-1}}\right)^{5/2}}
\end{align*}
It follows that
\begin{align*}
f'(u) &=
\frac{a \left( (1-a)^2 (2a-1)-(1+a)|x_1|^{2 a} \right)|x_1|^{-2 (a+1)}}
{\left(1+(1-a)^2 |x_1|^{-2 a}\right)^{5/2}} \\
&\leq
\frac{a (1-a)^2 (2a-1)|x_1|^{-2 (a+1)}}
{\left(1+(1-a)^2 |x_1|^{-2 a}\right)^{5/2}} \\
&\leq
\frac{a (2 a-1)}{(1-a)^3}|x_1|^{3 a-2}
\end{align*}
Integrating above, we have
\begin{align}\label{example lhs of stable}
\int_{|x|<t} f'(u) \varphi^2 dx \leq \frac{a (2 a-1)}{(1-a)^3} \int_{|x|<t}|x_1|^{3 a-2} \varphi^2 dx
\qquad \forall \ 0<t\leq 1.
\end{align}
On the other hand, using
$(\nabla u \cdot \nabla \varphi) \leq |\nabla u| |\nabla \varphi|$,
we have
\begin{align*}
\int_{|x|<t} \left( \frac{|\nabla \varphi|^2}{\sqrt{1+z}} - \frac{(\nabla u \cdot \nabla \varphi)^2}{(1+z)^{3/2}} \right)
&\geq \int_{|x|<t} \frac{|\nabla \varphi|^2}{(1+z)^{3/2}} \notag\\
&= \int_{|x|<t}
\frac{|\nabla \varphi|^2}{\left(1+ (1-a)^2 |x_1|^{-2a}\right)^{3/2}}
\qquad \forall \ 0<t\leq 1.
\end{align*}
Fix any $0<\lambda < 1$ and set
\begin{align*}
r_1 = \Bigl[(1-a)^2(\lambda^{-2/3}-1)\Bigr]^{1/(2a)}.
\end{align*}
Whenever \(|x_1| \leq r_1\), we have \(|x_1|^{2a} \leq (1-a)^2(\lambda^{-2/3}-1)\), and therefore
\begin{align*}
|x_1|^{2a}+(1-a)^2 \leq (1-a)^2\lambda^{-2/3}.
\end{align*}
Then we obtain
\begin{align*}
\frac{1}{\bigl(1+(1-a)^2|x_1|^{-2a}\bigr)^{3/2}} = \frac{|x_1|^{3a}}{\bigl(|x_1|^{2a}+(1-a)^2\bigr)^{3/2}}
\geq \lambda\frac{|x_1|^{3a}}{(1-a)^3}.
\end{align*}
It follows that
\begin{align}\label{example rhs of stable}
\int_{|x|<t} \left( \frac{|\nabla \varphi|^2}{\sqrt{1+z}} - \frac{(\nabla u \cdot \nabla \varphi)^2}{(1+z)^{3/2}} \right)
\geq \frac{\lambda}{(1-a)^3} \int_{|x|<t} |x_1|^{3a} |\nabla \varphi|^2 dx
\quad \forall \ 0<t\leq r_1.
\end{align}

Finally, let \(r_0 = r_0(\lambda) := \min\{1,r_1\}\). Then both estimates hold simultaneously for all \(|x|\leq r_0\).
Combining (\ref{example lhs of stable}) and (\ref{example rhs of stable}),
it can be seen that if one wants to prove
stability condition
\begin{align}\label{stable condition to proof}
\int_{B_{r_0}} f'(u)\varphi^2 dx
\leq \int_{B_{r_0}} \left( \frac{|\nabla \varphi|^2}{\sqrt{1+z}} - \frac{(\nabla u \cdot \nabla \varphi)^2}{(1+z)^{3/2}} \right) dx.
\end{align}
it is sufficient to prove
\begin{align}\label{to proof 2}
\int_{B_{r_0}}|x_1|^{3 a-2} \varphi^2 dx
\leq \frac{\lambda}{a (2 a-1)} \int_{B_{r_0}} |x_1|^{3a} |\nabla \varphi|^2 dx,
\end{align}
for all $\varphi\in C_c^\infty(B_{r_0}(0))$.

{\bf Step 3 (Verification of the stability condition).}
We set
\begin{align*}
\lambda=\frac{4a(2a-1)}{(3a-1)^2}.
\end{align*}
Because $1/2 < a < 1$, we have $0<\lambda<1$,
so this choice is admissible in Step~2.
Write $x = (x_1, x')\in \mathbb{R}\times\mathbb{R}^{n-1}$.
Applying the one-dimensional weighted Hardy inequality (see Lemma~\ref{Lemma Appendix})
\begin{align*}
\int_{-r_{0}}^{r_{0}}|t|^{3a-2} \psi(t)^{2}dt
\leq \Bigl(\frac{2}{3a-1}\Bigr)^{2}
\int_{-r_{0}}^{r_{0}} |t|^{3a}|\psi'(t)|^{2}dt
\qquad(\psi\in C_{c}^{\infty}(-r_{0},r_{0}))
\end{align*}
with $\psi(t)=\varphi(t,x')$ and integrating with respect to $x'$, we obtain
\begin{align*}
\int_{B_{r_0}} |x_{1}|^{3a-2}\varphi^{2}dx
&\leq \Bigl(\frac{2}{3a-1}\Bigr)^{2}
\int_{B_{r_0}} |x_{1}|^{3a}|\partial_{x_{1}}\varphi|^{2}dx \\[2pt]
&\leq \Bigl(\frac{2}{3a-1}\Bigr)^{2}
\int_{B_{r_0}} |x_{1}|^{3a}|\nabla\varphi|^{2}dx.
\end{align*}
Thanks the choice of $\lambda$, 
the last estimate is exactly (\ref{to proof 2}).
Hence $u$ satisfies the stability inequality (\ref{stable condition to proof}) for all $\varphi\in C_{c}^{\infty}(B_{r_{0}})$.
Thus $u$ is a stable solution of (\ref{equation}) in $B_{r_{0}}$.

Finally, set $u_{c}(x)=u(cx)/c$ with $c=r_{0}$. A straightforward change of variables shows that $u_{c}$ satisfies (\ref{equation}) in $B_{1}$ with a rescaled nonlinearity $f_{c}(t):=cf(ct)\in C^{1}(\mathbb{R})$, and the stability condition is preserved under this scaling. Moreover, $\nabla u\notin M^{n+\alpha}(B_{r_0})$ implies $\nabla u_{c}\notin M^{n+\alpha}(B_{1})$. Consequently $u_{c}$ is a stable solution in $B_{1}$ whose gradient does not belong to $M^{n+\alpha}(B_{1})$, which completes the proof.
\end{proof}

\section{Liouville Theorem}

\begin{proof}[Proof of Theorem~\ref{theorem liouville}]
Let $\phi$ be a smooth cutoff function satisfying
\begin{align}\label{def phi}
\left\{
\begin{array}{ll}
\phi(x) = 1, \quad & \text{if}\ |x|\leq R,\\
0\leq \phi(x) \leq 1\ \text{and}\ |\nabla \phi(x) | \leq C/R \quad & \text{if}\ R<|x|<2R,\\
\phi(x) = 0, \quad & \text{if}\ |x|\geq 2R.
\end{array}
\right.
\end{align}
Since $\nabla u$ is supported in $B_{2R}\backslash B_{R}$, we have
\begin{align*}
\text{RHS of}\ (\ref{key grad est})
\leq C \int_{B_{2R}\backslash {B_R}} \frac{z}{|x|^{2a}\sqrt{1+z}} dx
\leq \frac{C}{R^{2a}} \int_{B_{2R}\backslash {B_R}} z dx
\end{align*}
Then Lemma~\ref{lemma key grad est} gives
\begin{align}\label{72bed3d97}
\frac{1}{\rho^{2a}} \int_{B_\rho} \frac{z^2}{(1+z)^{3/2}} dx
+ \int_{B_{2R}\backslash B_\rho} \left( L_1+L_2+L_3 \right) \phi^2 dx
\leq \frac{C}{R^{2a}} \int_{B_{2R}\backslash {B_R}} z dx
\end{align}
Next, we consider two cases.

{\bf Case $2\leq n \leq 10$:}
In this case we set
\begin{align*}
a = \frac{n-2}{2}.
\end{align*}
Introducing $s,t$ and $\alpha,\beta,\gamma$ as in
(\ref{def s t}) and (\ref{def alpha beta gamma}).
Noting $a\leq 4$, this will gives
\begin{align*}
L_1 = \alpha s^2, \quad L_2 \geq \beta st, \quad L_3 = \gamma t^2.
\end{align*}
We compute the discriminant of the quadratic form $\alpha s^2+\beta st+\gamma t^2$:
\begin{align*}
\beta^2-4\alpha\gamma = a^2 z^2 (a (a+4)-4 n+8) =\frac{1}{16} (n-2)^2 \left(n^2-12 n+20\right) z^2
\end{align*}
Since $2\leq n \leq 10$, it follows that $\beta^2-4\alpha\gamma\leq 0$,
which yields
\begin{align*}
L_1+L_2+L_3 \geq \alpha s^2+\beta st+\gamma t^2 \geq 0.
\end{align*}

{\bf Case $n\geq 11$:}
In this case, we set
\begin{align*}
a=\sqrt{n-1}+1.
\end{align*}
Adopt the same notion $\alpha$, $\beta$, $\gamma$, $s$ and $t$ in above case,
noting $a\geq 4$ and $a\leq (n-2)/2$
we obtain
\begin{align*}
L_1= \alpha s^2,
\quad L_2 \geq \beta st+\frac{a(4-a) z}{|x|^{2a}(1+z)^{3/2}},
\quad L_3 = \gamma t^2+\frac{(n-2-2a) z}{|x|^{2a}(1+z)^{3/2}}
\end{align*}
Those together gives
\begin{align*}
L_1+L_2+L_3 &\geq \alpha s^2+\beta st+\gamma t^2+\frac{(-a^2+n-2+2a) z}{|x|^{2a}(1+z)^{3/2}} \\
&=s^2+\beta st+\gamma t^2
\end{align*}
where we have used that $a=\sqrt{n-1}+1$ implies $-a^2+n-2+2a=0$.
directly calculate shows
\begin{align*}
\beta^2-4\alpha\gamma = a^2 z^2 (a (a+4)-4 n+8)
=3 \left(-n+2 \sqrt{n-1}+4\right) \left(n+2 \sqrt{n-1}\right) z^2
\end{align*}
Noting $n\geq 11$ implies $-n+2 \sqrt{n-1}+4\leq 0$, which gives $\beta^2-4\alpha\gamma\leq 0$.
Then we obtain
\begin{align*}
L_1+L_2+L_3\geq \alpha s^2+\beta st+\gamma t^2 \geq 0.
\end{align*}
Define
\begin{align*}
a = \left\{
\begin{array}{ll}
(n-2)/2\quad & \text{if}\ 2\leq n\leq 10,\\
\sqrt{n-1}+1\quad & \text{if}\ n\geq 11.
\end{array}
\right.
\end{align*}
Then together both cases, we have $L_1+L_2+L_3\geq 0$.
Then from (\ref{72bed3d97}), we have
\begin{align}\label{22f651df6}
\frac{1}{\rho^{2a}} \int_{B_\rho} \frac{z^2}{(1+z)^{3/2}} dx
\leq \frac{C}{R^{2a}} \int_{B_{2R}\backslash {B_R}} z dx
\end{align}

{\bf Proof of Part (1).}
Assume that $u$ is nonconstant.
Then there exists some $\rho>0$ for which the left-hand side
of (\ref{22f651df6}) is positive
(otherwise $|\nabla u|\equiv0$ in $\mathbb{R}^n$ implies $u$ is a constant).
Fix such a $\rho$. From (\ref{22f651df6}) we obtain
\begin{align*}
\frac{1}{R^{2a}} \int_{B_{2R}\setminus B_{R}} z dx \geq c
\end{align*}
for some $c>0$ and all sufficiently large $R$. Because $|B_{2R}\setminus B_{R}| \sim R^{n}$, this implies
\begin{align*}
\fint_{B_{2R}\setminus B_{R}} z dx \geq cR^{2a-n}
\end{align*}
Recalling that $2a-n = 2q_n$, we arrive at (\ref{liouville theorem nonconstant bound}).

{\bf Proof of Part (2).}
This statement follows immediately,
since the growth condition (\ref{liouville theorem rigidity condition}) contradicts
(\ref{liouville theorem nonconstant bound}).
\end{proof}

\begin{proof}[Proof of Theorem~\ref{liouville theorem radial}]
Let
\begin{align*}
a=\sqrt{n-1}+1.
\end{align*}
Because $u$ is radial, identity (\ref{radial L1+L2+L3}) holds.
With our choice of $a$ we have $-a^2+2a+n-2=0$ and therefore
\begin{align*}
L_1+L_2+L_3\geq 0.
\end{align*}
Take a radial cutoff function $\phi$ satisfying (\ref{def phi}) and non-increasing in the radial variable.
For such a $\phi$ the relations (\ref{c582d1d70}) and (\ref{29f43fecf}) are valid.
Substituting them into the right-hand side of (\ref{key grad est}) gives
\begin{align*}
\text{RHS of}\ (\ref{key grad est})
= \int_{B_{2R}\setminus B_{R}}
\bigg(\frac{z|\nabla\phi|^{2}}{|x|^{2a-2}(1+z)^{3/2}}
+\frac{2(a-1)z\phi|\nabla\phi|}{|x|^{2a-1}(1+z)^{3/2}}\bigg)dx.
\end{align*}
Since $|\nabla\phi|\leq C/R$ and the integrand is supported in $B_{2R}\setminus B_{R}$,
we obtain
\begin{align*}
\text{RHS of}\ (\ref{key grad est})
\leq \frac{C}{R^{2a}} \int_{B_{2R}\setminus B_{R}}
\frac{z}{(1+z)^{3/2}}dx.
\end{align*}
Now apply Lemma~\ref{lemma key grad est} with this $\phi$. Using the nonnegativity of
$L_{1}+L_{2}+L_{3}$ we deduce
\begin{align}\label{edc14a382}
\frac{1}{\rho^{2a}}\int_{B_{\rho}}
\frac{z^2}{(1+z)^{3/2}}dx
\leq \frac{C}{R^{2a}} \int_{B_{2R}\setminus B_{R}}
\frac{z}{(1+z)^{3/2}}dx
\end{align}
for every $0<\rho<R$.

{\bf Proof of part (1).}
Assume that $u$ is nonconstant.
Then there exists $\rho>0$ such that left-hand side of (\ref{edc14a382}) is positive.
Fix such a $\rho$. From (\ref{edc14a382}) we deduce
\begin{align*}
\frac{1}{R^{2a}}
\int_{B_{2R}\setminus B_{R}}
\frac{z}{(1+z)^{3/2}}dx
\ge c
\end{align*}
for some $c>0$ and all sufficiently large $R$.
Since $|B_{2R}\setminus B_R| \sim R^n$, we obtain
\begin{align}\label{44883fc13}
\fint_{B_{2R}\setminus B_{R}}
\frac{z}{(1+z)^{3/2}}dx
\ge c R^{2a-n}.
\end{align}
Recalling that $2a-n = -n+2\sqrt{n-1}+2$,
we arrive at (\ref{liouville theorem radial nonconstant bound}).

{\bf Proof of part (2).}
We now show that each of the three conditions forces $u$ to be constant.

\begin{itemize}
\item
{Case $2\leq n\leq 6$:}
Noting $2a-n=-n+2\sqrt{n-1}+2>0$, the right-hand side of (\ref{44883fc13}) is large as $R\rightarrow \infty$.
However $z/(1+z)^{3/2}$ in left-hand side is bounded. 
So growth (\ref{44883fc13}) can never be satisfied in this case, which implies $u$ is a constant.
\item
{Case $n \geq 7$
and
\begin{align*}
|\nabla u(x)| = o(|x|^{-n/2+\sqrt{n-1}+1})\qquad \text{as}\ |x|\rightarrow+\infty.
\end{align*}
}
Using elementary inequality
\begin{align*}
\frac{z}{(1+z)^{3/2}} \leq z,
\end{align*}
from (\ref{edc14a382}) we obtain
\begin{align*}
\frac{1}{\rho^{2a}} \int_{B_\rho} \frac{z^2}{(1+z)^{3/2}} dx
\leq \frac{C}{R^{2a-n}} \fint_{B_{2R}\backslash {B_R}} |\nabla u|^2 dx
\end{align*}
Thanks to the assumed growth condition,
it follows that the right-hand side tends to $0$ as $R \to \infty$, which yields
\begin{align*}
\int_{B_\rho} \frac{z^2}{(1+z)^{3/2}} dx = 0.
\end{align*}
Since $\rho>0$ is arbitrary, we conclude $\nabla u = 0$ in $\mathbb{R}^n$, i.e., $u$ is constant.
\item
{Case $n \geq 7$
and
\begin{align*}
|\nabla u(x)|^{-1} = o(|x|^{-n+2\sqrt{n-1}+2})
\qquad \text{as}\ |x|\rightarrow+\infty.
\end{align*}
}
We use elementary inequality
\begin{align*}
\frac{z}{(1+z)^{3/2}} \leq \frac{1}{z^{1/2}}.
\end{align*}
From (\ref{edc14a382}) we obtain
\begin{align*}
\frac{1}{\rho^{2a}} \int_{B_\rho} \frac{z^2}{(1+z)^{3/2}} dx
\leq \frac{C}{R^{2a-n}} \fint_{B_{2R}\backslash {B_R}} \frac{1}{|\nabla u|} dx.
\end{align*}
Following the same procedure as in the previous case, we deduce that $u$ is constant.
\end{itemize}
\end{proof}

\section{Appendix}

Next, we intend to prove a one-dimensional weighted Hardy inequality (\ref{d397dd15f}).
It differs from the classical Hardy inequality
\begin{align}\label{6a6ac9888}
\int_0^{1} \frac{\varphi(t)^2}{t^2} dt \leq 4 \int_0^{1} \varphi'(t)^2 dt, \qquad \forall \ \varphi \in C_c^\infty(0,1).
\end{align}
in that the test functions $\varphi$ in (\ref{6a6ac9888}) cannot have support containing the origin, since $1/t^2$ is not integrable near $0$. In contrast, the test functions in (\ref{d397dd15f}) can have support containing the origin, because $|t|^{\beta-2}$ is not singular at $0$.

This means we cannot directly derive Lemma~\ref{Lemma Appendix} by simple transformations from the classical Hardy inequality. However, the proof of this lemma is similar in spirit to that of the Hardy inequality. This result might have appeared somewhere in the literature, but we could not find a reference. For the sake of completeness, we provide the proof here.

\begin{lemma}\label{Lemma Appendix}
Let \(\varphi \in C_c^\infty(-1,1)\) and \(\beta>1\). Then
\begin{align}\label{d397dd15f}
\int_{-1}^{1} |t|^{\beta-2} \varphi^2(t) dt \leq \left( \frac{2}{\beta-1} \right)^2 \int_{-1}^{1} |t|^{\beta} |\varphi'(t)|^2 dt.
\end{align}
\end{lemma}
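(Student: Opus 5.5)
The plan is to split the interval into $(0,1)$ and $(-1,0)$. By the symmetry $t\mapsto -t$ it suffices to prove
\begin{align*}
\int_0^1 t^{\beta-2}\varphi^2\,dt \leq \Bigl(\frac{2}{\beta-1}\Bigr)^2\int_0^1 t^\beta |\varphi'|^2\,dt
\end{align*}
for every $\varphi\in C^\infty([0,1])$ vanishing near $t=1$. No vanishing at $t=0$ is required.

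The key identity is integration by parts against the primitive of the weight. Since $\beta>1$, we have $\frac{d}{dt}\bigl(\frac{t^{\beta-1}}{\beta-1}\bigr)=t^{\beta-2}$, which gives
\begin{align*}
\int_0^1 t^{\beta-2}\varphi^2\,dt
= \Bigl[\frac{t^{\beta-1}}{\beta-1}\varphi^2\Bigr]_0^1 - \frac{2}{\beta-1}\int_0^1 t^{\beta-1}\varphi\varphi'\,dt .
\end{align*}
The boundary term vanishes at $t=1$ because $\varphi$ has compact support in $(-1,1)$. It also vanishes at $t=0$ because $\beta-1>0$ and $\varphi$ is bounded. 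This is precisely where $\beta>1$ allows the support to contain the origin, unlike the classical Hardy inequality (\ref{6a6ac9888}). The integrability of $t^{\beta-2}$ near $0$ (again from $\beta>1$) makes the left side finite, so all quantities are legitimate.

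Next, write $t^{\beta-1}\varphi\varphi' = (t^{(\beta-2)/2}\varphi)(t^{\beta/2}\varphi')$ and apply Cauchy--Schwarz:
\begin{align*}
I:=\int_0^1 t^{\beta-2}\varphi^2\,dt \leq \frac{2}{\beta-1} I^{1/2}\Bigl(\int_0^1 t^\beta|\varphi'|^2\,dt\Bigr)^{1/2}.
\end{align*}
If $I=0$ there is nothing to prove. Otherwise, divide by $I^{1/2}$, which is finite, and square to obtain the half-line inequality.

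Adding the analogous estimate on $(-1,0)$, obtained by applying the same argument to $\psi(t)=\varphi(-t)$, yields (\ref{d397dd15f}). The only delicate point is justifying that the boundary term at the origin vanishes and that $I<\infty$. Both follow directly from $\beta>1$, so I expect no real obstacle.
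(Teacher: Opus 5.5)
Your proof is correct and follows essentially the same route as the paper: integrate by parts against $t^{\beta-1}/(\beta-1)$ on each half-line, apply Cauchy--Schwarz, and divide by $I^{1/2}$. The only cosmetic difference is that the paper recombines the two halves and applies Cauchy--Schwarz once on all of $(-1,1)$, whereas you treat each half separately and add; you also correctly attribute the vanishing of the boundary term at $t=0$ to $\beta>1$ rather than to compact support.
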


\begin{proof}[Proof of Lemma~\ref{Lemma Appendix}]
We start with the integral on the left-hand side:
\begin{align*}
\int_{-1}^{1} \varphi^2(t) |t|^{\beta-2} dt &= \frac{-1}{\beta-1} \int_{-1}^{0} \varphi^2(t) d(-t)^{\beta-1}+\frac{1}{\beta-1} \int_{0}^{1} \varphi^2(t) d t^{\beta-1}.
\end{align*}
Integrating by parts, the boundary terms vanish due to the compact support of \(\varphi\), yielding
\begin{align*}
&= \frac{-1}{\beta-1} \varphi^2(t) (-t)^{\beta-1} \Big|_{-1}^{0}+\frac{1}{\beta-1} \int_{-1}^{0} (-t)^{\beta-1} d(\varphi^2(t)) \\
&\qquad + \frac{1}{\beta-1} \varphi^2(t) t^{\beta-1} \Big|_{0}^{1} - \frac{1}{\beta-1} \int_{0}^{1} t^{\beta-1} d(\varphi^2(t)) \\
&= \frac{2}{\beta-1} \int_{-1}^{0} (-t)^{\beta-1} \varphi'(t) \varphi(t) dt - \frac{2}{\beta-1} \int_{0}^{1} t^{\beta-1} \varphi'(t) \varphi(t) dt.
\end{align*}
Taking the absolute value and bounding,
\begin{align*}
&\leq \frac{2}{\beta-1} \int_{-1}^{1} |t|^{\beta-1} |\varphi'(t)| |\varphi(t)| dt \\
&\leq \frac{2}{\beta-1} \left( \int_{-1}^{1} |t|^{\beta-2} \varphi^2(t) dt \right)^{1/2} \left( \int_{-1}^{1} |t|^{\beta} |\varphi'(t)|^2 dt \right)^{1/2},
\end{align*}
where the last step follows from the Cauchy--Schwarz inequality. Denoting the left-hand side by \(I\), we have \(I \leq \frac{2}{\beta-1} I^{1/2} J^{1/2}\), where \(J = \int_{-1}^{1} |t|^{\beta} |\varphi'(t)|^2 dt\). Assuming \(I > 0\), dividing both sides by \(I^{1/2}\) gives \(I^{1/2} \leq \frac{2}{\beta-1} J^{1/2}\), and squaring both sides yields the desired inequality. If \(I = 0\), the inequality holds trivially.
\end{proof}

\textbf{Acknowledgements.}
The author would like to thank the anonymous referees for their careful reading
of the manuscript and for many valuable comments and suggestions, which have
led to a substantial improvement of the paper.

\textbf{Conflict of Interest Statement.}
The author declares that there is no conflict of interest.

\textbf{Data Availability Statement.}
No datasets were generated or analysed during the current study.

\end{document}